\newtheorem{defin}{Definition}
\newtheorem{propo}{ Proposition}
\newtheorem{lemme}{Lemma}
\newtheorem{coro}{Corollary}
\newtheorem{theorem}{Theorem}
\newtheorem{exe}{Example}
\begin{document}

\title [ Dunkl positive definite functions]{ Dunkl positive definite functions \\}%

\author[ J. El Kamel, K. Mehrez]{ Jamel El Kamel \qquad and \qquad Khaled Mehrez }
 \address{Jamel El Kamel. D\'epartement de Math\'ematiques fsm. Monastir 5000, Tunisia.}
 \email{jamel.elkamel@fsm.rnu.tn}
 \address{Khaled Mehrez. D\'epartement de Math\'ematiques IPEIM. Monastir 5000, Tunisia.}
 \email{k.mehrez@yahoo.fr}
\begin{abstract}
We introduce the notion of Dunkl positive definite and strictly positive definite functions on $\mathbb{R}^{d}$. This done by the use of the properties of Dunkl translation. We establish the analogue of Bochner's theorem in Dunkl setting. The case of radial functions is considered. We give a sufficient condition for a function to be Dunkl strictly positive definite on $\mathbb{R}^{d}.$  
\end{abstract}
\maketitle
{\it keywords:} Positive definite functions; Dunkl transform; 
Dunkl translation.\\
MSC (2000) 42A38, 42B08, 42815, 33D15, 47A05\\
\section{Introduction}
In classical analysis a complex valued continuous function is said positive definite (resp. strictly positive definite) on $\mathbb{R}$, if for every distinct real numbers $x_{1},x_{2},...,x_{n}$ and every complex numbers $z_{1},z_{2},...,z_{n}$ not all zero, the inequality 
\[\sum_{j=1}^{n}\sum_{k=1}^{n}z_{j}\overline{z_{k}}f(x_{j}-x_{k})\geq0\,\,(resp.\,>0)\]
hold true. (\rm{see [7]})

In 1930, the class of positive definite functions is fully characterized by Bochner's theorem [1], the function $f$ being positive definite if and only if it is the Fourier transform of a nonnegative finite Borel measure on the real line $\mathbb{R}.$

In this work, we introduce the analogue of positive definite function in Dunkl setting. This done by the use of the properties of the Dunkl translation. We establish a version of Bochner's theorem in Dunkl setting. We give a sufficient condition for a function in $A_{\kappa}(\mathbb{R}^{d})$ to be Dunkl strictly positive definite.

Our paper is organized as follows: In section 2, we present some preliminaries results and notations that will be useful in the sequel. In section 3, we give some properties of the Dunkl transform, the Dunkl translation and the Dunkl convolution. In section 4, we introduce the notion of the Dunkl positive definite functions in studying their properties, some examples are given. We prove that if $\varphi\in A_{\kappa}(\mathbb{R}^{d})$ is Dunkl positive definite, then the Dunkl transform of $\varphi$ is nonnegative and $\varphi$ is bounded. The case of radial function is considered. We state a version of Bochner's theorem in Dunkl setting. As application, we are interested with the Dunkl heat kernel , and we get a new equality for the modified Bessel function. The section 5 is devoted to Dunkl strictly positive definite functions.
\section{Notations and preliminaries}
Let $R$ be a fixed root system in $\mathbb{R}^{d}$, $G$ the associated finite reflexion group, and $R_{+}$ a fixed positive subsystem of $R,$ normalized so that $<\alpha,\alpha>=2$ for all $\alpha\in R_{+}$, where $<x,y>$ denotes the usual Euclidean inner product.

For a non zero $\alpha\in \mathbb{R}^{d}$, let use define the reflexion $\sigma_{\alpha}$ by 
\[\sigma_{\alpha}x=x-2\frac{<x,\alpha>}{<\alpha,\alpha>}\alpha,\,\, x\in\mathbb{R}^{d}.\]
Let $\kappa$ be a nonnegative multiplicity function $\alpha\longmapsto\kappa_{\alpha}$ defined on $R_{+}$ with the property that $\kappa_{\alpha}=\kappa_{\beta}$ where $\sigma_{\alpha}$ is conjugate to $\sigma_{\beta}$ in $G$. The weight function $h_{\kappa}$ est defined by 
\begin{equation}
h_{\kappa}(x)=\prod_{\alpha\in R_{+}}|<x,\alpha>|^{\kappa_{\alpha}},\,\, x\in\mathbb{R}^{d}.
\end{equation}
This is a nonegative homogeneous function of degre $\displaystyle{\gamma_{\kappa}=\sum_{\alpha\in R_{+}}\kappa_{\alpha}}$, which is invariant under the reflexion group $G.$\\
Let $T_{i}$ denote Dunkl's differential-difference operator defined in [2] by 
\begin{equation}
T_{i}f(x)=\partial_{i}f(x)+\sum_{\alpha\in R_{+}}\kappa_{\alpha}\frac{f(x)-f(\sigma_{\alpha}x)}{<\alpha,x>}<\alpha,e_{i}>,\,\,1\leq i\leq d,
\end{equation}
where $\partial_{i}$ is the ordinary partial derivative with respect to $x_{i}$, and $e_{1},e_{2},...,e_{d}$ are the standard unit vectors of $\mathbb{R}^{d}.$ It was proved in [2] that $T_{1},T_{2},...,T_{d}$ commute. Therefore we can naturally define $P(T)$ for any polynomial $P$, where $T=(T_{1},T_{2},...,T_{d}).$\\
Let $\mathcal{P}_{n}^{d}$ denote the space  of homogeneous polynomials of degree $n$ in $d-$variables. The operators $T_{i},\,1\leq i\leq d$ map  $\mathcal{P}_{n}^{d}$ to $\mathcal{P}_{n-1}^{d}.$ The intertwinig operator $V_{\kappa}$ is linear and determined uniquely as 
\begin{equation}
V_{\kappa}\mathcal{P}_{n}^{d}\subset\mathcal{P}_{n}^{d},\,\,V_{\kappa}1=1,\,\,\mathcal{T}_{i}V_{\kappa}=V_{\kappa}\partial_{i},\,1\leq i\leq d.
\end{equation}
The Dunkl kernel $E_{\kappa}$ associated with $G$ and $\kappa$ is defined by 
\begin{equation}
E_{\kappa}(x,y)=V_{\kappa}\left(e^{<.,y>}\right)(x),\,\, x,y\in\mathbb{R}^{d}.
\end{equation}
\begin{propo}(\rm{see[8]})
Let $y\in\mathbb{C}^{d}$. Then the function $f=E_{\kappa}(.,y)$
is the unique solution of the system 
\begin{equation}
T_{i}f=<e_{i},y>f,\, \textrm{for\, all}\,\,1\leq i\leq d,
\end{equation}
which is real-analytic in $\mathbb{R}^{d}$ and satisfies $f(0)=1.$
\end{propo}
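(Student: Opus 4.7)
The plan is to verify existence by a direct application of the intertwining relation, and to establish uniqueness by expanding a candidate solution in homogeneous Taylor components; this reduces uniqueness to a kernel lemma for the $T_{i}$'s on polynomials, which can be handled by a Dunkl analogue of Euler's identity.

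For existence, applying $T_{i}$ to $E_{\kappa}(x, y) = V_{\kappa}\bigl(e^{<\cdot, y>}\bigr)(x)$ and using the intertwining relation $T_{i} V_{\kappa} = V_{\kappa} \partial_{i}$ from (3) gives
\[ T_{i} E_{\kappa}(\cdot, y) = V_{\kappa}\bigl(\partial_{i} e^{<\cdot, y>}\bigr) = V_{\kappa}\bigl(<e_{i}, y>\, e^{<\cdot, y>}\bigr) = <e_{i}, y>\, E_{\kappa}(\cdot, y), \]
so $f = E_{\kappa}(\cdot, y)$ satisfies the system. For the normalization $f(0) = 1$, expand $e^{<x, y>} = \sum_{n \geq 0} \frac{<x, y>^{n}}{n!}$, apply $V_{\kappa}$ term by term (valid since $V_{\kappa}$ is linear and preserves each $\mathcal{P}_{n}^{d}$), and evaluate at $x = 0$: only the degree-zero piece contributes, and $V_{\kappa}(1)(0) = 1$. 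Real-analyticity on $\R^{d}$ then follows from the uniform convergence of the resulting series on compacta, a consequence of degree preservation by $V_{\kappa}$.

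For uniqueness, let $g$ be any real-analytic solution with $g(0) = 1$, and decompose its Taylor series about the origin as $g = \sum_{n \geq 0} g_{n}$ with $g_{n} \in \mathcal{P}_{n}^{d}$. The initial condition gives $g_{0} \equiv 1$; since $T_{i}$ lowers homogeneous degree by one, matching the degree-$(n-1)$ components in $T_{i} g = <e_{i}, y>\, g$ yields the recursion $T_{i} g_{n} = <e_{i}, y>\, g_{n-1}$ for $1 \leq i \leq d$ and $n \geq 1$. It therefore suffices to show that any $p \in \mathcal{P}_{n}^{d}$ with $n \geq 1$ satisfying $T_{i} p = 0$ for every $i$ is identically zero; this kernel step is the main obstacle. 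To establish it, I would invoke the Dunkl analogue of Euler's identity
\[ \sum_{i=1}^{d} x_{i} T_{i} p \;=\; n\, p + \sum_{\alpha \in R_{+}} \kappa_{\alpha}\bigl(p - p \circ \sigma_{\alpha}\bigr), \qquad p \in \mathcal{P}_{n}^{d}, \]
obtained by direct computation after using $\sum_{i} x_{i} <\alpha, e_{i}> = <\alpha, x>$ to cancel the denominator in the definition of $T_{i}$. Under $T_{i} p = 0$ this rearranges to $\bigl(n + \sum_{\alpha} \kappa_{\alpha}\bigr) p = \sum_{\alpha} \kappa_{\alpha}\, (p \circ \sigma_{\alpha})$; taking the supremum over the unit sphere and using that each $\sigma_{\alpha}$ is orthogonal together with $\kappa_{\alpha} \geq 0$ forces $n \sup|p| \leq 0$, whence $p \equiv 0$ for $n \geq 1$. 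The recursion then determines every $g_{n}$ uniquely, and real-analyticity propagates the equality $g = E_{\kappa}(\cdot, y)$ from a neighborhood of the origin to all of $\R^{d}$.
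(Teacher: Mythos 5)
The paper itself gives no proof of this proposition: it is quoted from the literature (reference [8]), so there is no internal argument to compare yours against. Your proof is essentially correct and self-contained. The existence half follows the standard route (the intertwining relation $T_{i}V_{\kappa}=V_{\kappa}\partial_{i}$ applied to the exponential). Your uniqueness half, however, takes a genuinely different and more elementary path than the usual one: in the literature the problem is likewise reduced, after expanding a solution into homogeneous components, to the triviality of the joint kernel of $T_{1},\dots,T_{d}$ on $\mathcal{P}_{n}^{d}$ for $n\geq 1$, but that kernel statement is normally obtained from the positive definiteness of the Dunkl pairing $[p,q]_{\kappa}=\left(p(T)q\right)(0)$ for $\kappa\geq 0$ (or, in Opdam's original treatment, from analytic machinery), whereas you obtain it from the Dunkl--Euler identity $\sum_{i}x_{i}T_{i}p=np+\sum_{\alpha\in R_{+}}\kappa_{\alpha}\left(p-p\circ\sigma_{\alpha}\right)$ together with a sup-norm estimate on the unit sphere, using only $\kappa_{\alpha}\geq 0$ and the orthogonality of the reflections. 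The identity is correct (it follows from $\sum_{i}x_{i}\langle\alpha,e_{i}\rangle=\langle\alpha,x\rangle$ and Euler's relation), the bound $(n+\gamma_{\kappa})\sup_{S^{d-1}}|p|\leq\gamma_{\kappa}\sup_{S^{d-1}}|p|$ does force $p\equiv 0$ for $n\geq 1$, and the recursion $T_{i}g_{n}=\langle e_{i},y\rangle g_{n-1}$ then pins down each $g_{n}$; this buys a proof that avoids the nontrivial positivity theorem for the Dunkl bilinear form.

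Two steps deserve more care than you give them. First, real-analyticity of $E_{\kappa}(\cdot,y)$ and the termwise application of $V_{\kappa}$ and $T_{i}$ to the exponential series do not follow from degree preservation alone; one needs a quantitative bound on $V_{\kappa}$ restricted to $\mathcal{P}_{n}^{d}$ (such estimates are available in the literature) to get locally uniform convergence, and the same convergence is what legitimizes extending the relation $T_{i}V_{\kappa}=V_{\kappa}\partial_{i}$, stated only on polynomials, to $e^{\langle\cdot,y\rangle}$. Second, when matching homogeneous components one should observe that $g_{n}-g_{n}\circ\sigma_{\alpha}$ vanishes on the hyperplane $\langle\alpha,x\rangle=0$ and hence is divisible by $\langle\alpha,x\rangle$ in the polynomial ring, so that $T_{i}g_{n}\in\mathcal{P}_{n-1}^{d}$, and that the interchange of $T_{i}$ with the infinite sum is justified by the absolute convergence just mentioned. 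These are standard repairs; the overall structure of your argument is sound.
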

We collect some further properties of the Dunkl kernel $E_{\kappa}$
\begin{propo}(\rm{see[4],[8]})
For $x,y\,\in\mathbb{C}^{d},\,\lambda\in\mathbb{C}$
\begin{enumerate}
\item $E_{\kappa}\left(x,\, y\right)=E_{\kappa}\left(y,\, x\right),$
\item $E_{\kappa}\left(\lambda x,\, y\right)=E_{\kappa}\left(x,\, \lambda y\right)$
\item $\overline{E_{\kappa}\left(x,\, y\right)}=E_{\kappa}\left(\overline{x},\,\overline{y}\right)$
\item $|E_{\kappa}(-ix,y)|\leq1$.
\item $\mid E_{\kappa}\left(x,\, y\right)\mid\leq e^{\parallel x\parallel.\parallel y\parallel},$
\item Let $\nu(z)=z_{1}^{2}+...+z_{d}^{2},\, z_{i}\in\mathbb{C}.$ For $z,\,\omega\in\mathbb{C}^{d},$
\[
c_{\kappa}\int_{\mathbb{R}^{d}}E_{\kappa}\left(x,\, z\right)E_{\kappa}\left(x,\,\omega\right)h_{\kappa}^{2}\left(x\right)e^{-\frac{\parallel x\parallel^{2}}{2}}dx=e^{\frac{\left(\nu(z)+\nu(\omega)\right)}{2}}E_{\kappa}\left(z,\,\omega\right),\]
\end{enumerate}
where $c_{\kappa}$ denotes the Mehta-type costant  defined by 
\begin{equation}
c_{\kappa}^{-1}=\int_{\mathbb{R}^{d}}h_{\kappa}^{2}(x)e^{-\frac{\parallel x\parallel^{2}}{2}}dx.
\end{equation}
\end{propo}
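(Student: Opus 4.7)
The plan is to split the six assertions into three groups, each requiring a different tool: properties (1)--(3) are algebraic consequences of the definition of $E_\kappa$ via the intertwining operator $V_\kappa$; (4) and (5) use a positivity property of $V_\kappa$; and (6) is a genuinely analytic identity that requires spectral information about the Dunkl harmonic oscillator.

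First I would expand
\[ E_\kappa(x,y)=\sum_{n\geq 0}\frac{1}{n!}V_\kappa\bigl(\langle\cdot,y\rangle^n\bigr)(x), \]
which follows from the definition by applying $V_\kappa$ termwise to the power series of $e^{\langle\cdot,y\rangle}$. For (1), symmetry comes from the fact that, up to the factor $n!$, $V_\kappa(\langle\cdot,y\rangle^n)(x)$ is a reproducing kernel for $\mathcal{P}_n^d$ under the natural Dunkl bilinear pairing on polynomials, hence symmetric in its two arguments. For (2) I would use that $V_\kappa$ preserves the degree of every homogeneous component, so
\[ V_\kappa(\langle\cdot,y\rangle^n)(\lambda x)=\lambda^n V_\kappa(\langle\cdot,y\rangle^n)(x)=V_\kappa(\langle\cdot,\lambda y\rangle^n)(x). \]
For (3) I would invoke that $V_\kappa$ maps real polynomials to real polynomials, so it commutes with simultaneous conjugation of the function argument and the evaluation point.

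For (4) and (5) I would rely on the positive integral representation of $V_\kappa$: for every $x\in\mathbb{R}^d$ there is a probability measure $\mu_x$ supported in $\{z:\|z\|\leq\|x\|\}$ with $V_\kappa f(x)=\int f(z)\,d\mu_x(z)$ for every continuous $f$. Then (4) is immediate, since $|E_\kappa(-ix,y)|\leq\int|e^{-i\langle z,y\rangle}|\,d\mu_x(z)=1$, while (5) follows from $|E_\kappa(x,y)|\leq\int e^{\|z\|\|y\|}\,d\mu_x(z)\leq e^{\|x\|\|y\|}$; the statement for complex $x,y$ is then obtained by analytic continuation from the real case.

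The real obstacle is (6), a reproducing identity against the Gaussian weight $h_\kappa^2(x)e^{-\|x\|^2/2}$. My plan is to recognize the right-hand side as a Mehler-type kernel for the generalized Hermite system $\{H_\nu^\kappa\}$ that diagonalizes the Dunkl harmonic oscillator. I would expand both factors $E_\kappa(x,z)$ and $E_\kappa(x,\omega)$ on the left in the basis $\{H_\nu^\kappa\}$, integrate term by term using the orthogonality relation
\[ c_\kappa\int_{\mathbb{R}^d}H_\nu^\kappa(x)H_\mu^\kappa(x)\,h_\kappa^2(x)e^{-\|x\|^2/2}dx=2^{|\nu|}\nu!\,\delta_{\nu\mu}, \]
and collect the diagonal series into the claimed product $e^{(\nu(z)+\nu(\omega))/2}E_\kappa(z,\omega)$. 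The one delicate step is the justification of termwise integration, which requires absolute convergence of the double series; the Gaussian weight makes this routine, but it is the only point of the proof that is not essentially formal.
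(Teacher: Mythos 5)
First, a point of comparison: the paper gives no proof of this proposition at all --- it is quoted from Dunkl [4] and R\"osler [8] --- so there is no internal argument to measure you against, and your sketch has to stand on its own. On that basis, your treatment of (1)--(3) is the standard one and is fine: symmetry of the homogeneous kernels $K_n(x,y)=\frac{1}{n!}V_\kappa\bigl(\langle\cdot,y\rangle^n\bigr)(x)$ as reproducing kernels for the Dunkl (Fischer-type) pairing, degree preservation of $V_\kappa$ for (2), and reality of $V_\kappa$ for (3). For (4)--(5) you correctly invoke R\"osler's positive integral representation of $V_\kappa$; just be aware that this representation is a far deeper theorem than anything else in the proposition, so you are importing the hard part rather than proving it, and that the symmetry of the pairing needed in (1) also deserves a citation.

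There are two genuine gaps. First, the closing step of (4)--(5), ``the statement for complex $x,y$ is then obtained by analytic continuation from the real case,'' is not an argument: analytic continuation does not propagate inequalities, and indeed $|E_\kappa(-ix,y)|\leq 1$ is simply false for nonreal $y$. The estimates hold as stated only for $x\in\mathbb{R}^d$ (which is how [8] states them, and all the paper ever uses); if one wants bounds beyond that, the correct route is to estimate each homogeneous component, e.g.\ $|K_n(x,y)|\leq \|x\|^n\|y\|^n/n!$ for real $x$ and complex $y$ from the measure representation, and then sum the series --- not to continue analytically. Second, your plan for (6) is circular as sketched: the expansion coefficients of $E_\kappa(\cdot,z)$ in the generalized Hermite basis are exactly the Mehler/generating-function identity for the $H_\nu^\kappa$, which in the literature (R\"osler's Hermite paper, [9] of this article) is \emph{derived from} Dunkl's formula (6), not the other way around; moreover your orthogonality constant $2^{|\nu|}\nu!$ is the classical one-dimensional normalization and is not correct for general $\kappa$ unless the basis is normalized with respect to the Dunkl pairing. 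A non-circular proof of (6) goes through the Macdonald-type formula
\[
[p,q]_\kappa=c_\kappa\int_{\mathbb{R}^d}\bigl(e^{-\Delta_\kappa/2}p\bigr)(x)\,\bigl(e^{-\Delta_\kappa/2}q\bigr)(x)\,h_\kappa^2(x)e^{-\frac{\|x\|^2}{2}}\,dx,
\]
combined with $T_iE_\kappa(\cdot,z)=z_iE_\kappa(\cdot,z)$, hence $\Delta_\kappa E_\kappa(\cdot,z)=\nu(z)E_\kappa(\cdot,z)$ and $e^{-\Delta_\kappa/2}E_\kappa(\cdot,z)=e^{-\nu(z)/2}E_\kappa(\cdot,z)$, together with $[e^{\langle\cdot,z\rangle},e^{\langle\cdot,w\rangle}]_\kappa=E_\kappa(z,w)$ from the reproducing property of the $K_n$; this is essentially Dunkl's original argument in [4], and you would need it (or an equivalent independent derivation of the Hermite generating function) to make your outline complete.
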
 
In particular, the function 
$$E_{\kappa}(x,y)=V_{\kappa}^{(x)}\left(e^{<x,y>}\right),\,\, x,y\in\mathbb{R}^{d},$$
plays the role of $e^{i<x,y>}$ in the ordinary Fourier analysis. 
Trought this paper, we fix the values of $\gamma$ and $\lambda$ as 
\begin{equation}
\gamma:=\gamma_{\kappa}=\sum_{\alpha\in R_{+}}k_{\alpha}\,\,\, \textrm{and}\,\,\,\lambda:=\gamma+\frac{d-2}{2}.
\end{equation}
Let us recall some classical functional spaces: 
\begin{itemize} 
\item $C(\mathbb{R}^{d})$ the set of continuous functions on $\mathbb{R}^{d}$ and $C_{0}(\mathbb{R}^{d})$ its subspace of continuous functions on $\mathbb{R}^{d}$ vanishing at infnity.

\item $S(\mathbb{R}^{d})$ the Schwartz space of infnitely differentiable functions on $\mathbb{R}^{d}$ which are rapidly decreasing as their derivatives.

\item $L^{p}\left(\mathbb{R}^{d},h_{\kappa}^{2}\right),\;1\leq p<\infty$, the space of measurable functions on $\mathbb{R}^{d}$ such that 
\[\parallel f\parallel_{\kappa,p}=\left(\int_{\mathbb{R}^{d}}|f(x)|^{p}h_{\kappa}^{2}(x)dx\right)^{\frac{1}{p}}<\infty.\]
\end{itemize}

\section{Harmonic analysis related to the Dunkl operator}
In this section, we present some properties of the Dunkl transform, the Dunkl translation and the Dunkl convolution studied and developed in great detail in [4,6,10,11].\\
The Dunkl transform is defined for $f\in L^{1}\left(\mathbb{R}^{d},h_{\kappa}^{2}\right)$ by 
\begin{equation}
D_{\kappa}f(x)=c_{\kappa}\int_{\mathbb{R}^{d}}f(y)E_{\kappa}\left(-ix,\, y\right)h_{\kappa}^{2}(y)dy,\,\,\, x\in\mathbb{R}^{d}.
\end{equation}
If $\kappa=0$, then $V_{\kappa}=id$ and the Dunkl transform coincides with the usual Fourier transform. If $d=1$ and $G=\mathbb{Z}_{2}$, then the Dunkl transform is related closely to the Hankel transform on the real line.\\
In fact, in this case,
\[
E_{\kappa}\left(x,\,-iy\right)=\Gamma\left(\kappa+\frac{1}{2}\right)\left(\frac{\mid xy\mid}{2}\right)^{-\kappa+\frac{1}{2}}\left[J_{\kappa-\frac{1}{2}}(\mid xy\mid)-i\;\textrm{sign}(xy)\, J_{\kappa+\frac{1}{2}}(\mid xy\mid)\right],\]
where $J_{\alpha}$ denotes the usual Bessel function of first kind and order $\alpha.$
\begin{theorem}(\rm{see [10]})
\begin{enumerate}
\item For $f\in L^{1}\left(\mathbb{R}^{d},\, h_{k}^{2}\right),$ we have $D_{\kappa}f\in C_{0}\left(\mathbb{R}^{d}\right),$ and 
\[\parallel D_{\kappa}f\parallel_{C_{0}}\leq\parallel f\parallel_{\kappa,1}.\]
\item When both $f$ and $D_{\kappa}f\in L^{1}\left(\mathbb{R}^{d},\, h_{k}^{2}\right),$ we have the inversion formula 
\[f(x)=c_{\kappa}\int_{\mathbb{R}^{d}}D_{\kappa}f(y)E_{\kappa}(ix,\, y)h_{\kappa}^{2}(y)dy \quad a.e.\] 
\item  The Dunkl transform $D_{\kappa}$ is an isomorphism of the Schwartz
class $\mathcal{S}(\mathbb{R}^{d})$ onto it self, and $D_{\kappa}^{2}f(x)=f(-x).$
\item The Dunkl transform $D_{\kappa}$ on $\mathcal{S}(\mathbb{R}^{d})$ extends uniquely to an isometry of $L^{2}\left(\mathbb{R}^{d},\, h_{k}^{2}\right).$
\item If $f,\, g\in L^{2}\left(\mathbb{R}^{d},\, h_{k}^{2}\right)$ then
\[\int_{\mathbb{R}^{d}}D_{\kappa}f(y)g(y)h_{\kappa}^{2}(y)dy=\int_{\mathbb{R}^{d}}f(y)D_{\kappa}g(y)h_{\kappa}^{2}(y)dy.\]
\end{enumerate}
\end{theorem}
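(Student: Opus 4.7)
The plan is to prove parts (1) and (3) from the structural identities of the Dunkl kernel already collected in Propositions 1 and 2, and then deduce (2), (4), (5) from (3) by standard density arguments using the Schwartz class.

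For (1), the sup norm bound $\|D_\kappa f\|_\infty \leq \|f\|_{\kappa,1}$ is immediate from item (4) of Proposition 2, namely $|E_\kappa(-ix,y)| \leq 1$. Continuity of $D_\kappa f$ follows by dominated convergence, since $y \mapsto E_\kappa(-ix,y)$ is real-analytic in $x$ and uniformly dominated by $|f(y)| h_\kappa^2(y)$. To obtain $D_\kappa f \in C_0(\mathbb{R}^d)$ (a Riemann--Lebesgue statement), I would first prove the assertion on the Schwartz class (using (3)) and then extend by density: given $\varepsilon > 0$, approximate $f \in L^1$ by $g \in \mathcal{S}$ in the $\|\cdot\|_{\kappa,1}$ norm, so $D_\kappa f$ is uniformly within $\varepsilon$ of the $C_0$-function $D_\kappa g$.

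For (3), the core step is the intertwining $D_\kappa(T_j f) = ix_j \, D_\kappa f$ and its dual $D_\kappa(x_j f) = -i T_j D_\kappa f$. The first of these follows from Proposition 1 applied to $y \mapsto E_\kappa(-ix, y)$ (giving $T_j^{(y)} E_\kappa(-ix,y) = -ix_j E_\kappa(-ix,y)$) combined with the skew-adjointness of $T_j$ against the measure $h_\kappa^2 \, dy$; the second is a direct differentiation under the integral sign. Iterating these two identities shows that polynomial growth of $x^\beta \partial^\alpha D_\kappa f$ is controlled by Schwartz seminorms of $f$, so $D_\kappa$ maps $\mathcal{S}(\mathbb{R}^d)$ continuously into itself. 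The involution identity $D_\kappa^2 f(x) = f(-x)$ is the deepest ingredient; I would obtain it by testing against the Dunkl--Hermite-type generating family built from Gaussians, where the explicit Mehta-Macdonald identity of Proposition 2 (6) furnishes both the inversion and the sign reversal directly, and then transfer the identity to all of $\mathcal{S}$ by the density of Gaussian-times-polynomial functions in $\mathcal{S}$.

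Once (3) is in hand, the remaining parts are routine. Part (2) follows by smoothing $f$ with a Schwartz approximate identity in the Dunkl convolution sense, applying the Schwartz inversion, and passing to the limit using that $f$ and $D_\kappa f$ are both $L^1$. Part (4) is Plancherel: on $\mathcal{S}$, $\|f\|_{\kappa,2}^2 = \int f(x) \overline{f(x)} h_\kappa^2(x)\,dx$ is rewritten using (3) together with item (3) of Proposition 2 to convert $\bar f$ into $D_\kappa$ of another Schwartz function, giving the isometry on $\mathcal{S}$; density extends it to $L^2$. Part (5) is then Fubini on $\mathcal{S}\times\mathcal{S}$ followed by $L^2$-density. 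The main obstacle I expect is the justification of integration by parts for $T_j$ on the weighted space $L^2(\mathbb{R}^d, h_\kappa^2)$, since $h_\kappa^2$ is singular on the reflection hyperplanes $\langle\alpha,x\rangle = 0$; this requires a careful cutoff-and-limit argument showing that the difference terms in the definition of $T_j$ cancel the boundary contributions, after which all the arguments above proceed smoothly.
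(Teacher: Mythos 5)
This theorem is not proved in the paper at all: it is stated as known background and attributed to the literature (the citation ``[10]'', i.e.\ Thangavelu--Xu, the results going back to Dunkl, de Jeu and R\"osler), so there is no in-paper argument to compare yours against. Taken on its own terms, your outline follows that classical route --- the intertwining relations $D_\kappa(T_jf)=ix_jD_\kappa f$ and $D_\kappa(x_jf)=-iT_jD_\kappa f$ for the Schwartz-class isomorphism, the Mehta-type identity of Proposition~2(6) applied to the Hermite-type (polynomial times Gaussian) system for the involution $D_\kappa^2f(x)=f(-x)$, then density arguments for Plancherel and the multiplication formula --- and that skeleton is sound, with the caveat that you still owe the density of the Hermite system in the Schwartz topology and the continuity of $D_\kappa$ on $\mathcal{S}(\mathbb{R}^{d})$ (which your seminorm estimates are meant to supply).

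Two concrete weak points. First, part (2) as you sketch it has a genuine gap: ``smoothing $f$ with a Schwartz approximate identity in the Dunkl convolution sense'' is not available at this stage, since Dunkl translation and convolution against a general kernel are not known to be positive or $L^1$-bounded (this is precisely why the paper's Theorem~3 is restricted to radial functions). The standard repair is specific: insert the Gaussian factor $e^{-\varepsilon\|y\|^{2}}$ under the inversion integral, use Proposition~2(6) to identify the resulting kernel explicitly as the heat kernel, and let $\varepsilon\to0$; your argument should be phrased that way rather than via a generic approximate identity. Second, the ``main obstacle'' you flag is misdiagnosed: for $\kappa_\alpha>0$ the weight $h_\kappa^{2}$ vanishes on the reflection hyperplanes rather than blowing up, and the anti-symmetry $\int_{\mathbb{R}^{d}} (T_jf)\,g\,h_\kappa^{2}\,dx=-\int_{\mathbb{R}^{d}} f\,(T_jg)\,h_\kappa^{2}\,dx$ for Schwartz $f,g$ is a short computation (change of variables $x\mapsto\sigma_\alpha x$ in the difference terms, the remaining integrands being odd and absolutely integrable since $|\langle\alpha,x\rangle|^{2\kappa_\alpha-1}$ is locally integrable); no cutoff-and-limit argument is needed. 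With those two points fixed, your plan is a correct, if necessarily lengthy, reconstruction of the cited theorem.
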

Let $y\in\mathbb{R}^{d}$ be given. The Dunkl translation operator $f\longmapsto\tau_{y}f$ is defined in $L^{2}\left(\mathbb{R}^{d},\, h_{k}^{2}\right)$ by the equation 
\begin{equation}
D_{\kappa}(\tau_{y}f)(x)=E_{\kappa}(iy,x)D_{\kappa}f(x),\,\,\, x\in\mathbb{R}^{d}.
\end{equation}
The above definition gives $\tau_{y}f$ as an $L^{2}$ function.\\
Let 
\begin{equation}
\mathcal{A}_{\kappa}(\mathbb{R}^{d})=\left\{ f\in L^{1}(\mathbb{R}^{d},h_{\kappa}^{2}):\, D_{\kappa}f\in L^{1}(\mathbb{R}^{d},h_{\kappa}^{2})\right\}.
\end{equation}
Note that $\mathcal{A}_{\kappa}(\mathbb{R}^{d})$ is contained in the intersection of $L^{1}(\mathbb{R}^{d},h_{\kappa}^{2})$ and $L^{\infty}$ and hence is a subspace of $L^{2}(\mathbb{R}^{d},h_{\kappa}^{2})$. For $f\in A_{\kappa}(\mathbb{R}^{d})$ we have
\begin{equation}
\tau_{y}f(x)=\int_{\mathbb{R}^{d}}E_{\kappa}(ix,y)E_{\kappa}(-iy,\xi)D_{\kappa}f(\xi)h_{\kappa}^{2}(\xi)d\xi,\quad \forall x\in \mathbb{R}^{d}.
\end{equation}
\begin{theorem}(\rm{see [10]})
Assume that  $f\in\mathcal{A}_{\kappa}(\mathbb{R}^{d})$ and $g\in L^{1}(\mathbb{R}^{d},h_{\kappa}^{2})$ is bounded. Then 
\begin{enumerate}
\item $\displaystyle{\int_{\mathbb{R}^{d}}\tau_{y}f(\xi)g(\xi)h_{\kappa}^{2}(\xi)d\xi=\int_{\mathbb{R}^{d}}f(\xi)\tau_{-y}f(\xi)h(\xi)d\xi.}$
\vspace{0.4cm}

\item $\displaystyle{\tau_{y}f(x)=\tau_{-x}f(-y).}$
\end{enumerate}
\end{theorem}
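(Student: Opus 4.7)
The plan is to prove both assertions by combining the intertwining relation (10), the Dunkl inversion (Theorem~1, part (2)), the $L^{2}$-duality (Theorem~1, part (5)), and the symmetry properties of $E_{\kappa}$ from Proposition~2. I will read assertion (1) as $\int \tau_{y}f\cdot g\,h_{\kappa}^{2}=\int f\cdot\tau_{-y}g\,h_{\kappa}^{2}$; the $f$ on the right-hand side appears to be a misprint for $g$, since otherwise $g$ would never enter the right-hand side.

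For part (1), my approach is as follows. Since $f\in\mathcal{A}_{\kappa}(\mathbb{R}^{d})$, $\tau_{y}f$ lies in $L^{2}(\mathbb{R}^{d},h_{\kappa}^{2})$, and since $g\in L^{1}\cap L^{\infty}$, $g$ lies in $L^{2}(\mathbb{R}^{d},h_{\kappa}^{2})$ as well. Writing $\tau_{y}f=D_{\kappa}\bigl(D_{\kappa}^{-1}(\tau_{y}f)\bigr)$ and applying point (5) of Theorem~1,
\[
\int_{\mathbb{R}^{d}}\tau_{y}f(\xi)g(\xi)h_{\kappa}^{2}(\xi)\,d\xi=\int_{\mathbb{R}^{d}}D_{\kappa}^{-1}(\tau_{y}f)(\eta)\,D_{\kappa}g(\eta)h_{\kappa}^{2}(\eta)\,d\eta.
\]
Using $D_{\kappa}^{-1}F(\eta)=D_{\kappa}F(-\eta)$ (a direct consequence of point (3) of Theorem~1) together with (10), the integrand becomes $E_{\kappa}(-iy,\eta)D_{\kappa}f(-\eta)D_{\kappa}g(\eta)$. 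A substitution $\eta\mapsto-\eta$, the evenness of $h_{\kappa}^{2}$, and the identity $E_{\kappa}(-iy,-\eta)=E_{\kappa}(iy,\eta)$ (Proposition~2(2) with $\lambda=-1$) convert this to $\int E_{\kappa}(iy,\eta)D_{\kappa}f(\eta)D_{\kappa}g(-\eta)h_{\kappa}^{2}(\eta)\,d\eta$. A completely symmetric computation on $\int f\cdot\tau_{-y}g\,h_{\kappa}^{2}$ yields the same expression, which proves (1).

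For part (2), since $D_{\kappa}(\tau_{y}f)=E_{\kappa}(iy,\cdot)D_{\kappa}f$ is in $L^{1}(\mathbb{R}^{d},h_{\kappa}^{2})$ (because $D_{\kappa}f\in L^{1}$ and $|E_{\kappa}(iy,\cdot)|\leq 1$ by Proposition~2(4)), the Dunkl inversion formula yields
\[
\tau_{y}f(x)=c_{\kappa}\int_{\mathbb{R}^{d}}E_{\kappa}(ix,\xi)\,E_{\kappa}(-iy,\xi)\,D_{\kappa}f(\xi)h_{\kappa}^{2}(\xi)\,d\xi,
\]
which is the corrected form of (11). Replacing $(x,y)$ by $(-y,-x)$ on the right-hand side simply permutes the two exponential factors and leaves the integral invariant, giving $\tau_{y}f(x)=\tau_{-x}f(-y)$ at once.

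The principal technical obstacle is sign bookkeeping in the Dunkl kernel. One must repeatedly invoke Proposition~2, notably $E_{\kappa}(i\xi,\eta)=E_{\kappa}(i\eta,\xi)$ (combining parts (1) and (2) of Proposition~2) and $E_{\kappa}(-ix,-y)=E_{\kappa}(ix,y)$, and verify that the functions fed into Theorem~1(5) indeed belong to $L^{2}(\mathbb{R}^{d},h_{\kappa}^{2})$. No deeper analytic difficulty arises, since all integrals are absolutely convergent under the hypotheses $f\in\mathcal{A}_{\kappa}$ and $g\in L^{1}\cap L^{\infty}$; Fubini's theorem is applicable throughout.
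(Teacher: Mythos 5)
The paper itself gives no proof of this theorem — it is quoted from [10] — so there is nothing internal to compare with, and I assess your argument on its own terms. Your part (1) is correct: reading the right-hand side as $\int f\,\tau_{-y}g\,h_{\kappa}^{2}$ (the printed $\tau_{-y}f$ and $h$ are evident typos), applying Theorem~1(5) to the pair $\bigl(D_{\kappa}^{-1}(\tau_{y}f),\,g\bigr)$, and using $D_{\kappa}^{-1}F(\eta)=D_{\kappa}F(-\eta)$, the relation $E_{\kappa}(\lambda x,y)=E_{\kappa}(x,\lambda y)$ and the evenness of $h_{\kappa}^{2}$, does reduce both sides to $\int E_{\kappa}(iy,\eta)D_{\kappa}f(\eta)D_{\kappa}g(-\eta)h_{\kappa}^{2}(\eta)\,d\eta$; since $g\in L^{1}\cap L^{\infty}\subset L^{2}$ and $\tau_{y}f,\tau_{-y}g\in L^{2}$, the appeal to (5) is legitimate.

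Part (2), however, contains a genuine sign gap. From the relation you invoke, $D_{\kappa}(\tau_{y}f)=E_{\kappa}(iy,\cdot)D_{\kappa}f$ (the paper's (9)), the inversion formula gives
\[
\tau_{y}f(x)=c_{\kappa}\int_{\mathbb{R}^{d}}E_{\kappa}(ix,\xi)\,E_{\kappa}(iy,\xi)\,D_{\kappa}f(\xi)h_{\kappa}^{2}(\xi)\,d\xi,
\]
with $+iy$, not $-iy$; so the display you call ``the corrected form of (11)'' does not follow from the premises you cite in the same sentence. This is not a cosmetic slip: the representation that does follow is symmetric in $x$ and $y$, so the ``permute the two factors'' step yields $\tau_{y}f(x)=\tau_{x}f(y)$, not $\tau_{y}f(x)=\tau_{-x}f(-y)$; in the classical case $\kappa=0$ these read $f(x+y)=f(y+x)$ versus $f(x+y)=f(-x-y)$, and the latter is false in general. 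The underlying issue is that the paper's formulas (9)--(10) and (11) are mutually inconsistent (they differ by $y\mapsto-y$), and your proof silently switches conventions in mid-derivation. To repair part (2) you must fix the convention first: under (9)--(10) the correct identity is $\tau_{y}f(x)=\tau_{x}f(y)$, and your argument proves exactly that; under the convention $D_{\kappa}(\tau_{y}f)=E_{\kappa}(-iy,\cdot)D_{\kappa}f$ (i.e.\ the corrected (11)), statement (2) as printed holds and your symmetry argument applies verbatim, but then the starting identity of part (2) — and the sign bookkeeping in part (1) — must be restated in that convention. As written, the derivation of (2) is internally inconsistent at precisely this point.
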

\begin{theorem}(\rm{see[10]})
Let $f\in\mathcal{A}_{\kappa}\left(\mathbb{R}^{d}\right)$ be
a radial and nonnegative function. Then $T_{y}f\geq0$, $T_{y}f\in L_{\alpha}^{1}\left(\mathbb{R}^{d}\right)$ and
\begin{equation}
\int_{\mathbb{R}^{d}}T_{y}f(x)h_{\kappa}^{2}(x)dx=\int_{\mathbb{R}^{d}}f(x) h_{\kappa}^{2}(y)dx.
\end{equation}
\end{theorem}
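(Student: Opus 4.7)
The theorem has two parts---pointwise nonnegativity $\tau_y f \geq 0$ and integrability together with the integral equality---and I would prove them in that order, with the $L^1$ bound dropping out as a byproduct of the integral computation. For positivity I would invoke the structural representation contained in R\"osler's work [10]: for radial $f(x)=F(\|x\|)$ in $\mathcal{A}_\kappa$ with $F\geq 0$, the Dunkl translation admits an integral formula of the type
$$\tau_y f(x) = \int_{\mathbb{R}^d} F\bigl(\sqrt{\|x\|^2 + \|y\|^2 - 2\langle y,\eta\rangle}\bigr)\, d\mu_x(\eta),$$
where $\mu_x$ is a probability measure supported in a ball. Nonnegativity of $F$ forces $\tau_y f \geq 0$ at once. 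Reconstructing this representation from scratch requires the full machinery of the intertwining operator $V_\kappa$ applied to shifts of the Gaussian; this is the deep content of the theorem, and I would simply cite [10] rather than redo it.

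For the integral identity, the plan is to test $\tau_y f$ against a shrinking Gaussian and pass to the limit. Fix $\epsilon>0$ and set $g_\epsilon(x)=e^{-\epsilon\|x\|^2/2}$; since $g_\epsilon$ is bounded and lies in $L^1(\mathbb{R}^d,h_\kappa^2)$, Theorem 2(1) yields
$$\int_{\mathbb{R}^d}\tau_y f(x)\,g_\epsilon(x)\,h_\kappa^2(x)\,dx \;=\; \int_{\mathbb{R}^d}f(x)\,\tau_{-y}g_\epsilon(x)\,h_\kappa^2(x)\,dx.$$
A direct computation using the inversion formula for $D_\kappa$ together with the Mehler-type identity in Proposition 2(6) produces the explicit expression
$$\tau_{-y}g_\epsilon(x) \;=\; e^{-\epsilon(\|x\|^2+\|y\|^2)/2}\, E_\kappa\bigl(\sqrt{\epsilon}\,x,\,\sqrt{\epsilon}\,y\bigr),$$
whose modulus is bounded by $e^{-\epsilon(\|x\|-\|y\|)^2/2}\leq 1$ thanks to item 5 of Proposition 2, and which tends pointwise to $1$ as $\epsilon\to 0^+$ (since $E_\kappa(0,0)=1$). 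Dominated convergence on the right-hand side, with dominator $|f|\in L^1(\mathbb{R}^d,h_\kappa^2)$, yields $\int f(x)\,h_\kappa^2(x)\,dx$ in the limit. Monotone convergence on the left-hand side, using $\tau_y f\geq 0$ and $g_\epsilon\uparrow 1$, yields $\int \tau_y f(x)\,h_\kappa^2(x)\,dx\in[0,\infty]$. Equating the two limits delivers both $\tau_y f\in L^1(\mathbb{R}^d,h_\kappa^2)$ and the stated equality (the right-hand side of the displayed identity should read $h_\kappa^2(x)$, the printed $h_\kappa^2(y)$ being a typographical slip).

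The main obstacle is clearly the positivity step, which is invisible from the spectral definition $D_\kappa(\tau_y f)=E_\kappa(iy,\cdot)D_\kappa f$ and genuinely requires the probabilistic integral representation of R\"osler. Once positivity is granted, the integrability and the identity follow by soft harmonic analysis: the Mehler-type formula in Proposition 2(6), Theorem 2(1), and the standard dominated/monotone convergence tandem.
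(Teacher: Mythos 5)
The paper itself offers no proof of this statement: it is Theorem 3, quoted (typos included) from reference [10] (Thangavelu--Xu), so there is no in-paper argument to compare yours against; I can only judge your reconstruction on its own terms, and it is essentially correct and close in spirit to how [10] actually proceeds. The positivity step does rest, as you say, on R\"osler's positive radial product formula $\tau_y f(x)=\int F\bigl(\sqrt{\|x\|^2+\|y\|^2-2\langle y,\eta\rangle}\bigr)\,d\mu_x(\eta)$ with $\mu_x$ a compactly supported probability measure; citing it is reasonable since the paper cites [10] for the whole theorem, but be aware that R\"osler establishes the formula for radial Schwartz functions, and the extension to general radial $f\in\mathcal{A}_\kappa(\mathbb{R}^d)$ (by regularization/density) is precisely the part carried out in [10], so your citation is doing real work there rather than being a formality. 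Your Gaussian-testing argument for the integral identity is sound: Theorem 2(1) applies because $g_\epsilon$ is bounded and lies in $L^1(\mathbb{R}^d,h_\kappa^2)$ (you correctly read the misprinted right-hand side as $\int f\,\tau_{-y}g\,h_\kappa^2$); the explicit formula $\tau_{-y}g_\epsilon(x)=e^{-\epsilon(\|x\|^2+\|y\|^2)/2}E_\kappa(\sqrt{\epsilon}\,x,\sqrt{\epsilon}\,y)$ (up to the immaterial sign of $y$) does follow from (11), the Gaussian's Dunkl transform and Proposition 2(6) after rescaling, using $E_\kappa(iz,iw)=E_\kappa(z,-w)$, which is property (2) with $\lambda=i$, to pass from imaginary to real arguments; the bound $e^{-\epsilon(\|x\|-\|y\|)^2/2}\le 1$ from property (5) and the dominated/monotone convergence limits are correct and indeed yield both $\tau_y f\in L^1(\mathbb{R}^d,h_\kappa^2)$ and the equality. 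You are also right that the displayed identity in the statement should carry $h_\kappa^2(x)$ on the right-hand side, with $T_y$ and $L^1_\alpha$ being misprints for $\tau_y$ and $L^1(\mathbb{R}^d,h_\kappa^2)$.
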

The Dunkl convolution operator is defined on $L^{2}\left(\mathbb{R}^{d},\, h_{k}^{2}\right)$ by: for $f,\, g\in L^{2}\left(\mathbb{R}^{d},\, h_{k}^{2}\right)$,
\begin{equation}
f\star_{\kappa}g(x)=\int_{\mathbb{R}^{d}}f(y)\tau_{x}g^{\vee}(y)h_{\kappa}^{2}(y)dy,
\end{equation}
where $g^{\vee}(y)=g(-y).$\\
Note that as $\tau_{x}g^{\vee}\in L^{2}\left(\mathbb{R}^{d},\, h_{k}^{2}\right)$, the above convolution is well defined. We can also write the definition as 
\begin{equation}
f\star_{\kappa}g(x)=\int_{\mathbb{R}^{d}}D_{\kappa}f(\xi)D_{\kappa}g(\xi)E_{\kappa}(ix,\xi)h_{\kappa}^{2}(\xi)d\xi.
\end{equation}
\begin{theorem}(\rm{see [9,10,11]})
\begin{enumerate}
\item Let $f,g\in L^{2}\left(\mathbb{R}^{d},h_{\kappa}^{2}\right)$, then
\vspace{0.2cm}
\begin{enumerate}
\item $D_{\kappa}(f\star_{\kappa}g)=D_{\kappa}f.D_{\kappa}g.$
\vspace{0.2cm}
\item $f\star_{\kappa}g=g\star_{\kappa}f.$
\vspace{0.2cm}
\end{enumerate}
\item Let $f\in L^{2}\left(\mathbb{R}^{d},h_{\kappa}^{2}\right)$ and $g\in L^{1}\cap L^{2}\left(\mathbb{R}^{d},h_{\kappa}^{2}\right)$,
then $f\star_{\kappa}g\in L^{2}\left(\mathbb{R}^{d},h_{\kappa}^{2}\right)$
and \begin{equation}
\parallel f\star_{\kappa}g\parallel_{\kappa,2}\leq\parallel g\parallel_{\kappa,1}\parallel f\parallel_{\kappa,2}.
\end{equation}
\end{enumerate}
\end{theorem}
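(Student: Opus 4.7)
The strategy is to deduce all three parts from the Fourier-side identity (15), which expresses Dunkl convolution as the inverse Dunkl transform of the pointwise product of the two Dunkl transforms, together with the Plancherel isometry of Theorem 1(4) and the elementary bound $\|D_\kappa g\|_{C_0}\leq \|g\|_{\kappa,1}$ from Theorem 1(1).

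For (1)(a), I first observe that since $f,g\in L^2(\mathbb{R}^d,h_\kappa^2)$, Theorem 1(4) places $D_\kappa f$ and $D_\kappa g$ in $L^2(\mathbb{R}^d,h_\kappa^2)$, so Cauchy--Schwarz puts their product $D_\kappa f\cdot D_\kappa g$ in $L^1(\mathbb{R}^d,h_\kappa^2)$. Formula (15) then reads $f\star_\kappa g$ as (a constant multiple of) the inverse Dunkl transform of $D_\kappa f\cdot D_\kappa g$, so applying $D_\kappa$ to both sides of (15) and invoking the inversion formula of Theorem 1(2) yields $D_\kappa(f\star_\kappa g)=D_\kappa f\cdot D_\kappa g$. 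Part (1)(b) is then immediate: pointwise multiplication of scalar functions is commutative, so $D_\kappa(f\star_\kappa g)=D_\kappa(g\star_\kappa f)$ by (1)(a), and the $L^2$ injectivity of $D_\kappa$ (Theorem 1(4)) forces $f\star_\kappa g=g\star_\kappa f$.

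For (2), the Plancherel identity combined with (1)(a) gives
\begin{equation*}
\|f\star_\kappa g\|_{\kappa,2}=\|D_\kappa(f\star_\kappa g)\|_{\kappa,2}=\|D_\kappa f\cdot D_\kappa g\|_{\kappa,2}\leq \|D_\kappa g\|_{C_0}\,\|D_\kappa f\|_{\kappa,2}.
\end{equation*}
The hypothesis $g\in L^1(\mathbb{R}^d,h_\kappa^2)$ enters through Theorem 1(1), which supplies $\|D_\kappa g\|_{C_0}\leq \|g\|_{\kappa,1}$, and a second application of Plancherel gives $\|D_\kappa f\|_{\kappa,2}=\|f\|_{\kappa,2}$. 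Combining these produces the claimed inequality and in particular exhibits $f\star_\kappa g$ as an element of $L^2(\mathbb{R}^d,h_\kappa^2)$.

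The main obstacle is a rigorous justification of (15) when $f$ and $g$ are only assumed to lie in $L^2$, because the pointwise definition (14) rests on the Dunkl translation $\tau_x g^\vee$, whose $L^2$ theory is more delicate than its classical counterpart. The cleanest way around this is to prove the theorem first on the dense subspace $\mathcal{S}(\mathbb{R}^d)$, where Theorem 1(3) together with (9) makes (14) and (15) manifestly equivalent, and then extend by density using the operator-norm bound derived in (2) to continuously extend $\star_\kappa$ from $\mathcal{S}\times\mathcal{S}$ to $L^2\times(L^1\cap L^2)$.
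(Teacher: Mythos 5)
First, a point of comparison: the paper does not prove this theorem at all — it is quoted as background from the references [9,10,11] (R\"osler, Thangavelu--Xu, Trim\`eche) — so there is no internal proof to measure yours against, and I assess your argument on its own. Your overall strategy (work on the transform side, using the Plancherel isometry of Theorem 1(4) and the bound $\parallel D_{\kappa}g\parallel_{C_{0}}\le\parallel g\parallel_{\kappa,1}$ of Theorem 1(1)) is indeed the standard one, but as written the logic has genuine gaps. To ``apply $D_{\kappa}$ to both sides'' of the transform-side formula (your (15); it is (14) in the paper, and the translation-based definition is (13), so your labels are off by one) you must already know that $f\star_{\kappa}g$ lies in $L^{1}$ or $L^{2}$: for general $f,g\in L^{2}\left(\mathbb{R}^{d},h_{\kappa}^{2}\right)$ it is a priori only a bounded continuous function (the inverse transform of the $L^{1}$ function $D_{\kappa}f\,D_{\kappa}g$), so neither the integral (8) nor the $L^{2}$ extension of $D_{\kappa}$ applies to it, and the inversion formula of Theorem 1(2) requires both the function and its transform to be in $L^{1}$, which you do not have. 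The same domain problem undermines your proof of (1)(b): injectivity of $D_{\kappa}$ on $L^{2}$ can only be invoked once $f\star_{\kappa}g$ and $g\star_{\kappa}f$ are known to lie in $L^{2}$; the correct (and much simpler) route is that (14) is manifestly symmetric in $f$ and $g$. Finally, in part (2) the first equality $\parallel f\star_{\kappa}g\parallel_{\kappa,2}=\parallel D_{\kappa}(f\star_{\kappa}g)\parallel_{\kappa,2}$ already presupposes $f\star_{\kappa}g\in L^{2}$, which is precisely the membership you are trying to establish, so the argument is circular at that point.

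The repair is to run the whole argument on the transform side and only afterwards come back: for $g\in L^{1}\cap L^{2}$ one has $\parallel D_{\kappa}g\parallel_{\infty}\le\parallel g\parallel_{\kappa,1}$, hence $D_{\kappa}f\,D_{\kappa}g\in L^{2}$ with $\parallel D_{\kappa}f\,D_{\kappa}g\parallel_{\kappa,2}\le\parallel g\parallel_{\kappa,1}\parallel f\parallel_{\kappa,2}$; its inverse $L^{2}$-transform is therefore an $L^{2}$ function with that norm, and it is identified with $f\star_{\kappa}g$ by applying the Parseval relation of Theorem 1(5) to the defining integral (13) together with (9); only then is $D_{\kappa}(f\star_{\kappa}g)$ defined and equal to $D_{\kappa}f\,D_{\kappa}g$. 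Your closing density argument does not substitute for this: a continuous extension of $\star_{\kappa}$ from $\mathcal{S}(\mathbb{R}^{d})\times\mathcal{S}(\mathbb{R}^{d})$ to $L^{2}\times(L^{1}\cap L^{2})$ covers neither part (1), which is asserted for arbitrary $f,g\in L^{2}$, nor the identification of the extension with the pointwise formula (13). One last caveat, inherited from the paper rather than introduced by you: with the normalizations (8), Theorem 1(2) and (14) as printed, applying $D_{\kappa}$ to (14) produces an extra factor $c_{\kappa}$, so the ``constant multiple'' you hedge on would actually survive into (1)(a) unless the measure is renormalized as in the cited references.
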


\section{Dunkl Positive definite Functions}
\begin{defin}\label{d1}
A continuous function $\varphi$ of $ L^{2}\left(\mathbb{R}^{d},\, h_{k}^{2}\right)$ is said 
Dunkl positive definite (resp. stictly Dunkl positive definte) if for  every finite distinct real numbers  $x_{1},...,x_{n},$ and every complex numbers $\alpha_{1}\,,...,\,\alpha_{n} $, not all zero, the inequality 
\[\sum_{j=1}^{n}\sum_{k=1}^{n}\alpha_{j}\overline{\alpha_{k}}\tau_{x_{j}}\left(\varphi\right)(x_{k})\geq0,\;\;(resp.>0)\]
holds true. 
Where $\tau_{x}$ denotes the Dunkl translation.
\end{defin}

From definition.\ref{d1} we can read of the elementary properties of a Dunkl positive definite function.
\begin{propo}\label{t1}(Properties of Dunkl positive definte functions)
\begin{enumerate}
\item A nonnegative finite linear combination of Dunkl positive definite functions is Dunkl positive definite.
\item Let $\varphi$ be a Dunkl positive definite function, then
\vspace{0.1cm}
\begin{enumerate}
\item The function $\tau_{x}\varphi(x)\geq0,$ for all $x\in\mathbb{R}^{d}$. In particular, $\varphi(0)\geq0.$
\vspace{0.1cm}
\item $\overline{\varphi(-x)}=\varphi(x)$, for all $x\in\mathbb{R}^{d}.$
\end{enumerate}
\end{enumerate}
\end{propo}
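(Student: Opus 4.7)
The plan is to handle the three items in sequence, with the first being immediate from linearity and the other two obtained by specializing the definition to small $n$.

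For item (1), I would simply observe that the Dunkl translation $\tau_{x_j}$ is linear (this is clear from its Fourier-multiplier definition (10)). Hence if $\varphi=\sum_{\ell}c_\ell\varphi_\ell$ with $c_\ell\ge 0$ and each $\varphi_\ell$ Dunkl positive definite, then for any distinct $x_1,\dots,x_n$ and any $\alpha_1,\dots,\alpha_n\in\mathbb{C}$,
\[
\sum_{j,k}\alpha_j\overline{\alpha_k}\,\tau_{x_j}\varphi(x_k)
=\sum_{\ell}c_\ell\sum_{j,k}\alpha_j\overline{\alpha_k}\,\tau_{x_j}\varphi_\ell(x_k)\ge 0,
\]
each inner sum being nonnegative.

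For item (2)(a), I would take $n=1$, $x_1=x$, $\alpha_1=1$ in Definition \ref{d1}; this directly gives $\tau_x\varphi(x)\ge 0$ for every $x\in\mathbb{R}^d$. Setting $x=0$ and using that $\tau_0=\mathrm{id}$ (which follows from $E_\kappa(0,\cdot)=1$ and the defining relation (10)) yields $\varphi(0)\ge 0$.

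For item (2)(b), I would argue that the $n\times n$ matrix $A=(a_{jk})$ with $a_{jk}=\tau_{x_j}\varphi(x_k)$ is Hermitian, and then specialize $n=2$. The Hermiticity is a standard consequence of real-valuedness of the quadratic form: since $\alpha^{*}A\alpha\in\mathbb{R}_{\ge 0}$ for every $\alpha\in\mathbb{C}^n$, one has $\alpha^{*}A\alpha=\overline{\alpha^{*}A\alpha}=\alpha^{*}A^{*}\alpha$, hence $\alpha^{*}(A-A^{*})\alpha=0$ for all $\alpha$, which (since $A-A^{*}$ is skew-Hermitian) forces $A=A^{*}$. Now for any $x\neq 0$, pick $x_1=0$, $x_2=x$ (two distinct points): the Hermitian identity $a_{21}=\overline{a_{12}}$ reads
\[
\tau_{x}\varphi(0)=\overline{\tau_{0}\varphi(x)}=\overline{\varphi(x)}.
\]
On the other hand, Theorem 2(2) gives $\tau_{x}\varphi(0)=\tau_{0}\varphi(-x)=\varphi(-x)$. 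Combining these two equalities yields $\varphi(-x)=\overline{\varphi(x)}$, which is the claim; the case $x=0$ reduces to $\overline{\varphi(0)}=\varphi(0)$, which is guaranteed by (2)(a).

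The arguments are essentially classical; the only non-trivial point is the interaction between the Hermitian extraction from the PSD quadratic form and the Dunkl-specific identity $\tau_y\varphi(0)=\varphi(-y)$ obtained from Theorem 2(2), so I expect that invoking Theorem 2(2) correctly is the most substantive step. No continuity or density argument is needed beyond the definition.
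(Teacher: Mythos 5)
Your proposal is correct and follows essentially the same route as the paper: items (1) and (2)(a) are handled identically (linearity, then $n=1$, $\alpha_1=1$, $x_1=x$), and for (2)(b) both arguments specialize to the two points $0$ and $x$, use realness of the quadratic form over complex coefficients, and identify $\tau_x\varphi(0)$ with $\varphi(-x)$. The only cosmetic differences are that the paper extracts conjugate symmetry by the explicit choices $c=1$ and $c=i$ rather than your general Hermiticity (polarization) lemma, and it uses the identity $\tau_x\varphi(0)=\varphi(-x)$ silently, whereas you justify it by citing Theorem 2(2).
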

\begin{proof}
\begin{enumerate}
\item The first property is immediate consequence of the definition \ref{d1}.
\item The second property follows by choosing $n=1$, $\alpha_{1}=1$ and $x_{1}=x$ in the definition \ref{d1}.
\item In the definition \ref{d1}, let $n=2$, $x_1=0$, $\alpha_1=1$, $\alpha_2=c$ and $x_2=x$, then 

\[\varphi(0)+\mid c\mid^{2}\tau_{x}\varphi(x)+c\varphi(-x)+\overline{c}\varphi(x)\geq0.\]
Setting $c=1$ and $c=i$, respectively, we deduce that $\varphi(x)+\varphi(-x)$ and $i(\varphi(-x)-\varphi(x)$ must be reals. This can only be satisfied when $\overline{\varphi(-x)}=\varphi(x)$.
\end{enumerate}
\end{proof}
\begin{coro}\label{c11}
Let $\varphi\in \mathcal{A}_{\kappa}(\mathbb{R}^{d})$ be a Dunkl positive definite function, then $D_{\kappa}(\varphi)$
is real.
\end{coro}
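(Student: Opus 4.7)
The plan is to verify that $D_\kappa(\varphi)$ equals its own complex conjugate by direct computation, leveraging the conjugate symmetry $\overline{\varphi(-x)}=\varphi(x)$ just established in Proposition~\ref{t1} and the algebraic properties of the Dunkl kernel listed in Proposition~2.

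More concretely, I would start from the defining formula
\[
\overline{D_\kappa\varphi(x)}=c_\kappa\int_{\mathbb{R}^d}\overline{\varphi(y)}\,\overline{E_\kappa(-ix,y)}\,h_\kappa^2(y)\,dy
\]
(pulling the conjugation inside the integral is legitimate since $\varphi\in\mathcal{A}_\kappa(\mathbb{R}^d)\subset L^1(\mathbb{R}^d,h_\kappa^2)$ and $|E_\kappa(-ix,y)|\le 1$ by item (4) of Proposition~2). Next, since $x,y$ are real, item (3) of Proposition~2 yields $\overline{E_\kappa(-ix,y)}=E_\kappa(ix,y)$. The key step is then the change of variables $y\mapsto -y$: the measure $h_\kappa^2(y)\,dy$ is invariant because $h_\kappa(-y)=\prod_{\alpha\in R_+}|\langle -y,\alpha\rangle|^{\kappa_\alpha}=h_\kappa(y)$, and by the homogeneity relation (2) of Proposition~2 one has $E_\kappa(ix,-y)=E_\kappa(-ix,y)$. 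This transforms the integral into
\[
\overline{D_\kappa\varphi(x)}=c_\kappa\int_{\mathbb{R}^d}\overline{\varphi(-y)}\,E_\kappa(-ix,y)\,h_\kappa^2(y)\,dy.
\]

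Finally, I would invoke property (b) from Proposition~\ref{t1}, namely $\overline{\varphi(-y)}=\varphi(y)$, to collapse the right-hand side back to $D_\kappa\varphi(x)$. This gives $\overline{D_\kappa\varphi(x)}=D_\kappa\varphi(x)$ for every $x\in\mathbb{R}^d$, which is exactly the claim.

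The argument is essentially bookkeeping: there is no real obstacle, but the step that most needs care is verifying the two symmetry identities $E_\kappa(ix,-y)=E_\kappa(-ix,y)$ and $h_\kappa(-y)=h_\kappa(y)$, since without the $G$-invariance (or more precisely the evenness under $y\mapsto -y$) of $h_\kappa$ the substitution would not clean up, and without the homogeneity of $E_\kappa$ the kernel would not return to its original form. Both are immediate from the material recalled in Section~2, so once they are invoked, the corollary follows in a single line.
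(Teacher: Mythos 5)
Your proof is correct and follows essentially the same route as the paper: conjugate under the integral, use $\overline{E_\kappa(x,y)}=E_\kappa(\overline{x},\overline{y})$, substitute $y\mapsto -y$, and conclude with $\overline{\varphi(-y)}=\varphi(y)$ together with $E_\kappa(\lambda x,y)=E_\kappa(x,\lambda y)$. Your explicit verification of the evenness of $h_\kappa^2$ under $y\mapsto -y$ is a detail the paper leaves implicit, but the argument is the same.
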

\begin{proof}
For $\varphi\in \mathcal{A}_{\kappa}(\mathbb{R}^{d})$, we have 
\[D_{\kappa}(\varphi)(x)=c_{\kappa}\int_{\mathbb{R}^{d}}E_{\kappa}(y,-ix)\varphi(y)h_{\kappa}^{2}(y)dy.\]
Hence
\[\overline{ D_{\kappa}(\varphi)(x)}=c_{\kappa}\int_{\mathbb{R}^{d}}\overline{E_{\kappa}(y,-ix)}\; \overline{\varphi(y)}h_{\kappa}^{2}(y)dy.\]
Since $\overline{E_{\kappa}(x,y)}=E_{\kappa}(\overline{x},\overline{y})$ for $x,y\in \mathbb{C}^{d}$, we obtain
\begin{equation*}
\begin{split}
\overline{ D_{\kappa}(\varphi)(x)}&=c_{\kappa}\int_{\mathbb{R}^{d}}E_\kappa(y,ix)\overline{\varphi(y)}h_{\kappa}^{2}(y)dy\\
&=c_{\kappa}\int_{\mathbb{R}^{d}} E_{\kappa}(-y,ix)\; \overline{\varphi(-y)}h_{\kappa}^{2}(y)dy.
\end{split}
\end{equation*} 
So, by proposition \ref{t1}, we have 
$$\overline{\varphi(-x)}=\varphi(x),$$
and $E_{\kappa}(\lambda x,y)=E_{\kappa}(x,\lambda y)$, for any $\lambda\in\mathbb{C}$ we obtain 
$$\overline{ D_{\kappa}(\varphi)(x)}= D_{\kappa}(\varphi)(x).$$  
\end{proof}

\noindent We begin by seeking sufficient conditions for a function to be Dunkl positive definite. 
\begin{theorem}\label{t2}
Let $\varphi\in\mathcal{A}_{\kappa}(\mathbb{R}^{d})$ be a nonnegative function, then $D_{\kappa}(\varphi)$
is Dunkl positive definite.
\end{theorem}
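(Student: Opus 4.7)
The approach is to rewrite the quadratic form
$$S := \sum_{j,k=1}^n \alpha_j \overline{\alpha_k}\, \tau_{x_j}(D_\kappa \varphi)(x_k)$$
as an integral of $\varphi$ against a manifestly nonnegative integrand, so that the hypothesis $\varphi \geq 0$ forces $S \geq 0$. Before starting, I would verify that $\psi := D_\kappa \varphi$ is admissible in Definition 1: by Theorem 2(1) it belongs to $C_0(\mathbb{R}^d)$, and by Theorem 2(3) we have $D_\kappa \psi(x) = \varphi(-x)$, which is integrable against $h_\kappa^2$ because $\varphi$ is, so $\psi$ lies in $\mathcal{A}_\kappa(\mathbb{R}^d) \subset L^2(\mathbb{R}^d, h_\kappa^2)$.

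Next, I derive an explicit integral representation of $\tau_y \psi(x)$ in terms of $\varphi$. Starting from the defining identity (10), substituting $D_\kappa \psi(\xi) = \varphi(-\xi)$, and applying the Dunkl inversion formula of Theorem 2(2), one obtains an integral whose integrand is the product $E_\kappa(iy,\xi)\,E_\kappa(ix,\xi)\,\varphi(-\xi)\,h_\kappa^2(\xi)$. The change of variable $\xi \mapsto -\xi$ (using that $h_\kappa^2$ is even and that $E_\kappa(\lambda x, y) = E_\kappa(x, \lambda y)$ from Proposition 2(2)) flips the signs of both kernels, and Proposition 2(3) applied at real arguments then gives $E_\kappa(iy, \xi) = \overline{E_\kappa(-iy, \xi)}$, so that
$$\tau_y \psi(x) = c_\kappa \int_{\mathbb{R}^d} E_\kappa(-ix, \xi)\, \overline{E_\kappa(-iy, \xi)}\, \varphi(\xi)\, h_\kappa^2(\xi)\, d\xi.$$

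Substituting $y = x_j$, $x = x_k$ into this formula and summing with weights $\alpha_j \overline{\alpha_k}$ yields
$$S = c_\kappa \int_{\mathbb{R}^d} \varphi(\xi)\, \Bigl| \sum_{j=1}^n \alpha_j\, \overline{E_\kappa(-ix_j, \xi)} \Bigr|^2 h_\kappa^2(\xi)\, d\xi,$$
because the double sum in the integrand splits as $\bigl(\sum_k \overline{\alpha_k}\, E_\kappa(-ix_k, \xi)\bigr)\bigl(\sum_j \alpha_j\, \overline{E_\kappa(-ix_j, \xi)}\bigr)$, and the first factor is precisely the complex conjugate of the second. Since $\varphi \geq 0$ and $h_\kappa^2 \geq 0$, the integrand is pointwise nonnegative and $S \geq 0$, which is exactly the definition of Dunkl positive definiteness. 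The delicate step is the sign-tracking in the middle paragraph: the raw inversion produces two kernel factors with matching imaginary signs rather than opposite ones, and only after the $\xi \mapsto -\xi$ substitution together with Proposition 2(3) do we obtain one kernel and the complex conjugate of another, which is the configuration needed for the double sum to factor as a squared modulus.
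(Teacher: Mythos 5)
Your overall strategy is the same as the paper's: represent $\tau_{x_j}(D_\kappa\varphi)(x_k)$ as an integral against $\varphi$ via the inversion formula and factor the double sum into a squared modulus; your preliminary check that $\psi=D_\kappa\varphi$ is a continuous member of $\mathcal{A}_{\kappa}(\mathbb{R}^{d})\subset L^{2}(\mathbb{R}^{d},h_{\kappa}^{2})$ is a welcome addition that the paper omits. However, the step you yourself flag as delicate is exactly where the argument breaks. Starting from (10) as printed, $D_{\kappa}(\tau_{y}\psi)(\xi)=E_{\kappa}(iy,\xi)D_{\kappa}\psi(\xi)$, inversion gives the integrand $E_{\kappa}(iy,\xi)E_{\kappa}(ix,\xi)\varphi(-\xi)$, and after the substitution $\xi\mapsto-\xi$ you correctly reach
\begin{equation*}
\tau_{y}\psi(x)=c_{\kappa}\int_{\mathbb{R}^{d}}E_{\kappa}(-ix,\xi)\,E_{\kappa}(-iy,\xi)\,\varphi(\xi)\,h_{\kappa}^{2}(\xi)\,d\xi,
\end{equation*}
in which both kernels still carry the \emph{same} sign. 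Passing from this to $E_{\kappa}(-ix,\xi)\,\overline{E_{\kappa}(-iy,\xi)}$ is not legitimate: Proposition 2(3) gives $\overline{E_{\kappa}(-iy,\xi)}=E_{\kappa}(iy,\xi)$, so replacing $E_{\kappa}(-iy,\xi)$ by its conjugate would require the kernel to be real-valued, which it is not (for $\kappa=0$ it is $e^{-i\langle y,\xi\rangle}$). With the matching-sign representation the double sum only factors as $\bigl(\sum_{j}\alpha_{j}E_{\kappa}(-ix_{j},\xi)\bigr)\overline{\bigl(\sum_{k}\alpha_{k}E_{\kappa}(ix_{k},\xi)\bigr)}$, which is not a squared modulus; indeed, read literally with the sign in (10), the statement would reduce for $\kappa=0$ to $\sum_{j,k}\alpha_{j}\overline{\alpha_{k}}\varphi(x_{j}+x_{k})\geq0$, which is false for a general nonnegative preimage, so the conjugation step cannot be repaired inside that reading.

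The way out, and what the paper actually does, is to use the translation formula with the opposite sign, i.e.\ the representation (11) (equivalently the convention $D_{\kappa}(\tau_{y}f)(\xi)=E_{\kappa}(-iy,\xi)D_{\kappa}f(\xi)$, the one consistent with Theorem 3(2); the sign in (10) appears to be a misprint). Then one gets directly
\begin{equation*}
\tau_{y}(D_{\kappa}\varphi)(x)=c_{\kappa}\int_{\mathbb{R}^{d}}E_{\kappa}(ix,\xi)\,E_{\kappa}(-iy,\xi)\,\varphi(-\xi)\,h_{\kappa}^{2}(\xi)\,d\xi,
\end{equation*}
and the quadratic form equals $c_{\kappa}\int_{\mathbb{R}^{d}}\bigl|\sum_{j}\alpha_{j}E_{\kappa}(-ix_{j},\xi)\bigr|^{2}\varphi(-\xi)h_{\kappa}^{2}(\xi)\,d\xi\geq0$, which is precisely the paper's proof. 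Your final displayed formula is this representation after the change of variable $\xi\mapsto-\xi$, so your conclusion and factorization are the intended ones; it is only the derivation of that formula from (10) as printed that is invalid.
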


\begin{proof}
For $\varphi\in\mathcal{A}_{\kappa}(\mathbb{R}^{d})$, we have 
\begin{equation*}
\tau_{y}\left(D_{\kappa}(\varphi)\right)(x)=\int_{\mathbb{R}^{d}} E_{\kappa}(-iy,\xi)E_{\kappa}(ix,\xi)\varphi(-\xi)h_{\kappa}^{2}(\xi)d\xi.
\end{equation*}
Thus, 
\begin{equation*}
\begin{split}
\sum_{j=1}^{n}\sum_{l=1}^{n}\alpha_{j}\overline{\alpha_{k}}\tau_{x_{j}}\left(D_{\kappa}(\varphi)\right)(x_{l})&=\int_{\mathbb{R}^{d}}\left[\sum_{j=1}^{n}\sum_{l=1}^{n}\alpha_{j}\overline{\alpha_{l}}\left(E_{\kappa}(-ix_j,\xi)E(ix_l,\xi)\right)\right]\varphi(-\xi)h_{\kappa}^{2}(\xi)d\xi\\
&=\int_{\mathbb{R}^{d}}\left[\sum_{j=1}^{N}\alpha_{j}E_{\kappa}(-ix_{j},\xi)\right]\overline{\left[\sum_{l=1}^{N}\alpha_{l}E_{\kappa}(-ix_{l},\xi)\right]}\varphi(-\xi)h_{\kappa}^{2}(\xi)d\xi\\
&=\int_{\mathbb{R}^{d}}\bigg|\sum_{j=1}^{N}\alpha_{j}E_{\kappa}(-ix_{j},\xi)\bigg|^{2}\varphi(-\xi)h_{\kappa}^{2}(\xi)d\xi\geq0.
\end{split}
\end{equation*}
Which completes the proof.
\end{proof}
\begin{exe}
For $t>0$, the function 
$$F_t(x)=e^{-t\parallel x\parallel^{2}}$$
is Dunkl positive definite.\\
Indeed, put
$$G_t(x)=\frac{c_{\kappa}}{(4t)^{\gamma+\frac{d}{2}}}e^{-\frac{\parallel x\parallel^{2}}{4t}}.$$
Thus, $G_t$ is nonnegative function of $L^{1}\left(\mathbb{R}^{d},\, h_{\kappa}^{2}\right).$
Moreover, (\rm{see [9]})
\begin{equation}\label{e2}
F_t(x)=D_\kappa(G_t)(x).
\end{equation}
Since $F_t(x)\in L^{1}\left(\mathbb{R}^{d},\, h_{\kappa}^{2}\right)$, we conclude by theorem \ref{t2}.
\end{exe}
\begin{exe}
Consider the modified Bessel function of the second kind of order $\alpha$ defined by
\[K_{\alpha}(x)=\int_{0}^{+\infty}e^{-x\cosh(t)}\cosh(\alpha t)dt,\,\,\, x>0.\]
Using the integral representation \rm{[5,(7.12.24]}
\[2K_{\alpha}(ax)=a^{\alpha}\int_{0}^{+\infty}t^{-1-\alpha}e^{-\frac{x}{2}(t+\frac{a^{2}}{t})}dt,\]
by setting $a=r,\;x=1$ and substituting $u=2t$, and using $K_{\alpha}=K_{-\alpha}$, we have 
\begin{equation}\label{e4}
K_{\alpha}(r)=r^{-\alpha}2^{\alpha-1}\int_{0}^{+\infty}u^{\alpha-1}e^{-u}e^{-\frac{r^{2}}{4u}}du,.
\end{equation}
Now, putting
\[\Phi(y)=\frac{1}{(1+\parallel y\parallel_{2}^{2})^{p}},\,\,\, y\in\mathbb{R}^{d},\]
with $p\in\mathbb{N},$ such that $p>\frac{d}{2}+\gamma+1.$\\
Since $p>\frac{d}{2}+\gamma+1,$ the function $\Phi$ is in $(L^{1}\cap L^{2})\left(\mathbb{R}^{d},\, h_{\kappa}^{2}\right)$. From, the integral representation of  the gamma function, for $p>0$, we have
\begin{equation*}
\begin{split}
\Gamma(p)&=\int_{0}^{\infty}t^{p-1}e^{-t}dt\\
&=s^{p}\int_{0}^{\infty}u^{p-1}e^{-su}du.
\end{split}
\end{equation*}
Let $s=1+\parallel y\parallel_{2}^{2}$, then we get
\[\Phi(y)=\frac{1}{\Gamma(p)}\int_{0}^{\infty}u^{p-1}e^{-u}e^{-u\parallel y\parallel_{2}^{2}}du.\]
Thus
\begin{equation*}
\begin{split}
D_{\kappa}(\Phi)(\omega)&=c_{\kappa}\int_{\mathbb{R}^{d}}\Phi(x)E_{\kappa}(x,-i\omega)h_\kappa^{2}(x) dx\\
&=\frac{c_{\kappa}}{\Gamma(p)}\int_{\mathbb{R}^{d}}\int_{0}^{\infty}u^{p-1}e^{-u}e^{-u\parallel y\parallel_{2}^{2}}E_{\kappa}(y,-i\omega)h_{\kappa}^{2}(y)dydu\\
&=\frac{1}{\Gamma(p)}\int_{0}^{\infty}u^{p-1}e^{-u}\left[c_{\kappa}\int_{\mathbb{R}^{d}}e^{-u\parallel y\parallel_{2}^{2}}E_{\kappa}(y,-i\omega)h_{\kappa}^{2}(y)dy\right]du\\
&=\frac{1}{\Gamma(p)}\int_{0}^{\infty}u^{p-1}e^{-u}D_{\kappa}\left(F_{u}(.)\right)(\omega)du\\
&=\frac{c_{\kappa}}{\Gamma(p)2^{\gamma+\frac{d}{2}}}\int_{0}^{\infty}u^{p-\gamma-\frac{d}{2}-1}e^{-u}e^{-\frac{\parallel\omega\parallel_{2}^{2}}{4u}}du
\end{split}
\end{equation*}
Using the relation (\ref{e4}) we obtain
\begin{equation}\label{3}
D_{\kappa}(\Phi)(\omega)=\frac{c_{\kappa}}{2^{p-1}}\parallel\omega\parallel_{2}^{p-\gamma-\frac{d}{2}}K_{p-\gamma-\frac{d}{2}}(\parallel\omega\parallel_{2}).
\end{equation}
Since for $\alpha>0$, the even function $x^\alpha K_\alpha(x)$ is positive and belongs to $L^1([0,+\infty[,x^{2\alpha+1}dx)$, by the inversion formula and theorem \ref{t2} we deduce that $\Phi(y)=\frac{1}{(1+\parallel y\parallel_{2}^{2})^{p}}$ is a Dunkl positive definite function. 
\end{exe}

\begin{exe}\label{p2}
Let $\varphi \in L^{2}\left(\mathbb{R}^{d},\, h_{\kappa}^{2}\right)$ be a continuous function.  We consider the functions $\gamma_t,\;\;t>0$, defined by
\[\gamma_{t}(y)=\sum_{j=1}^{N}\alpha_{j}\tau_{x_{j}}\left(G_{t}(y)\right),\;y\in\mathbb{R}^{d}\]
where $\alpha_j\in\mathbb{C},\;x_{j}\in\mathbb{R}^{d}$ for all $1\leq j\leq N$ and $G_t$ is the function definite in example1.\\ 

\noindent If $<\varphi \star_\kappa \gamma_t,\gamma_t>\geq0$, then $\varphi$ is Dunkl positive definite.
\end{exe}
\noindent Indeed, by the definition of the generalized translation operator and equation (\ref{e2}), we have
\[D_{\kappa}(\gamma_{t})(\omega)=\frac{1}{c_{\kappa}}\sum_{j=1}^{N}\alpha_{j}E_{\kappa}(-ix_{j},\omega)e^{-2t\parallel |\omega\parallel |^{2}},\]
which leads to
\begin{equation}
\begin{split}
D_{\kappa}\left(\gamma_{t}\star_{\kappa}\overline{\gamma^{\vee}}_{t}\right)(\omega)&=\mid D_{\kappa}(\gamma_{t})\mid^{2}(\omega)\\
&=\frac{1}{c_{k}^{2}}\bigg|\sum_{j=1}^{N}\alpha_{j}E_{\kappa}(-ix_{j},\omega)\bigg|^{2}e^{-2t\parallel \omega\parallel^{2}}\\
&=\frac{1}{c_{k}}\sum_{j,l=1}^{N}\alpha_{j}\overline{\alpha_{l}}E_{\kappa}(-ix_{j},\omega)\overline{E_{\kappa}(-ix_{l},\omega)}e^{-2t\parallel \omega \parallel^{2}}\\
&=D_{\kappa}\left(\sum_{j,l=1}^{N}\alpha_{j}\overline{\alpha_{l}}\tau_{x_{j}}\left[\tau_{-x_{l}}G_{2t}(.)\right]\right)(\omega).
\end{split}
\end{equation}
Since the Dunkl transform is a topological automorphism of the Schwartz
space $\mathcal{S}(\mathbb{R}^{d})$, then 
$$\gamma_{t}\star_{\kappa}\overline{\gamma^{\vee}}_{t}(\gamma)= \sum_{j,l=1}^{N}\alpha_{j}\overline{\alpha_{l}}\tau_{x_{j}}\left[\tau_{-x_{l}}G_{2t}(.)\right](\omega),$$
i.e
$$\gamma_{t}\star_{\kappa}\overline{\gamma^{\vee}}_{t}(\omega)= \sum_{j,l=1}^{N}\alpha_{j}\overline{\alpha_{l}}\tau_{x_{j}}\left(\Gamma_{\kappa}(2t,x_l,.)\right)(\omega),$$
where $\Gamma_\kappa$ is the Dunkl type heat kernel.
Thus,
\begin{equation*}
\begin{split}
\int_{\mathbb{R}^{d}}\varphi(y)\,\gamma_{t}\star_{\kappa}\overline{\gamma^{\vee}_{t}}(y)h_{\kappa}^{2}(y)dy&=\sum_{j,l=1}^{N}\alpha_{j}\overline{\alpha_{k}}\int_{\mathbb{R}^{d}}\varphi(y)\tau_{x_{j}}\Gamma_{\kappa}(2t,\, x_{l},\, y)h_{\kappa}^{2}(y)dy,\\
&=\sum_{j,l=1}^{N}\alpha_{j}\overline{\alpha_{l}}\int_{\mathbb{R}^{d}} \tau_{x_{j}}\varphi(y)\,\Gamma_{\kappa}(2t,\, x_{l},\, y)h_{\kappa}^{2}(y)dy.
\end{split}
\end{equation*}
By theorem 4.7 in [9], we have 
\[\lim_{t\longrightarrow0}\int_{\mathbb{R}^{d}}\varphi(y)\gamma_{t}\star_{\kappa}\overline{\gamma^\vee_{t}}(y)h_{\kappa}^{2}(y)dy=\sum_{j,l=1}^{N}\alpha_{j}\overline{\alpha_{l}}\,\,\, \tau_{x_{j}}\varphi(x_{l}).\]
Which completes the proof.

\begin{propo}\label{p1}
Let $\varphi \in\mathcal{A}_{\kappa}(\mathbb{R}^{d})$. If $\varphi$ is Dunkl positive definite function and $f \in L^{2}\left(\mathbb{R}^{d},\, h_{\kappa}^{2}\right)$, then
\begin{equation}\label{e1}
<\varphi \star_\kappa f,f>\geq0.
\end{equation}
\end{propo}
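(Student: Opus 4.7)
My strategy is to rewrite $\langle\varphi\star_\kappa f,f\rangle$ as a double integral whose kernel is $(x,y)\mapsto\tau_x\varphi(y)$ and then to recognize this integral as a limit of the exact sums appearing in Definition~\ref{d1}.

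First, using the commutativity of $\star_\kappa$ (Theorem~5), the defining formula (13) for the Dunkl convolution, and Fubini's theorem, I would obtain
\[
\langle\varphi\star_\kappa f,f\rangle=\int_{\mathbb{R}^{d}}\!\!\int_{\mathbb{R}^{d}}\overline{f(x)}\,f(y)\,\tau_{x}\varphi^{\vee}(y)\,h_{\kappa}^{2}(x)h_{\kappa}^{2}(y)\,dx\,dy.
\]
Proposition~\ref{t1} gives $\varphi^{\vee}=\overline{\varphi}$. Combining the Fourier representation (12) of $\tau_{x}$ with Corollary~\ref{c11} (which asserts that $D_{\kappa}\varphi$ is real), one checks the identity $\tau_{x}\overline{\varphi}(y)=\overline{\tau_{x}\varphi(y)}$. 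Taking complex conjugates therefore yields
\[
\langle\varphi\star_\kappa f,f\rangle=\overline{\int_{\mathbb{R}^{d}}\!\!\int_{\mathbb{R}^{d}}f(x)\,\overline{f(y)}\,\tau_{x}\varphi(y)\,h_{\kappa}^{2}(x)h_{\kappa}^{2}(y)\,dx\,dy},
\]
so it is enough to show the inner double integral is nonnegative.

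Next, since $\varphi\in\mathcal{A}_{\kappa}(\mathbb{R}^{d})\subset L^{1}(\mathbb{R}^{d},h_{\kappa}^{2})$, Theorem~5 shows that the bilinear form $(g,h)\mapsto\langle\varphi\star_\kappa g,h\rangle$ is continuous on $L^{2}\times L^{2}$ with bound $\|\varphi\|_{\kappa,1}\|g\|_{\kappa,2}\|h\|_{\kappa,2}$. Together with the density of $C_{c}(\mathbb{R}^{d})$ in $L^{2}(\mathbb{R}^{d},h_{\kappa}^{2})$, this reduces the problem to $f\in C_{c}(\mathbb{R}^{d})$. For such $f$, formula~(12) combined with the bound $|E_{\kappa}(\pm i\,\cdot,\xi)|\leq 1$ and $D_{\kappa}\varphi\in L^{1}(h_{\kappa}^{2})$ shows that the kernel $(x,y)\mapsto\tau_{x}\varphi(y)$ is jointly continuous and bounded; hence the integrand is continuous and compactly supported. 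I would then approximate the integral by Riemann sums over a fine grid $\{z_{j}\}$ of distinct points of $\mathbb{R}^{d}$ with mesh $\Delta$. Setting $\alpha_{j}:=f(z_{j})\,h_{\kappa}^{2}(z_{j})\,\Delta^{d}$, each Riemann sum equals
\[
\sum_{j,k}\alpha_{j}\overline{\alpha_{k}}\,\tau_{z_{j}}\varphi(z_{k}),
\]
which is nonnegative by Definition~\ref{d1} applied to the distinct points $z_{j}$. Letting $\Delta\to 0$ gives the desired inequality.

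The main technical obstacle is the justification of the Riemann-sum passage in the presence of the weight $h_{\kappa}^{2}$, which vanishes on the finite union of hyperplanes $\bigcup_{\alpha\in R_{+}}\{x:\langle x,\alpha\rangle=0\}$. Since this exceptional set has Lebesgue measure zero, one may choose the grid to be disjoint from it, and the classical convergence of Riemann sums for continuous compactly supported integrands applies without modification.
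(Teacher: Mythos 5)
Your argument is essentially the paper's own proof: rewrite $\langle\varphi\star_\kappa f,f\rangle$ as a double integral with kernel $\tau_x\overline{\varphi}(y)$ (using $\varphi^{\vee}=\overline{\varphi}$), approximate that integral by Riemann sums which are nonnegative by Definition~\ref{d1}, and pass to all of $L^{2}\left(\mathbb{R}^{d},h_{\kappa}^{2}\right)$ by density (the paper approximates via Schwartz functions where you use continuous compactly supported ones). Your write-up is a bit more explicit on points the paper leaves implicit — the conjugation identity $\tau_x\overline{\varphi}=\overline{\tau_x\varphi}$, the $L^{2}$-continuity of the form via $\parallel \varphi\star_{\kappa}g\parallel_{\kappa,2}\leq\parallel\varphi\parallel_{\kappa,1}\parallel g\parallel_{\kappa,2}$, and the choice of grid avoiding the zero set of $h_{\kappa}^{2}$ — but the route is the same.
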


\begin{proof}
Since $\varphi\in\mathcal{A}_{\kappa}(\mathbb{R}^{d}),$ and $f\in L^{2}\left(\mathbb{R}^{d},\, h_{\kappa}^{2}\right),$
then $\varphi\star_{\kappa}f\in L^{2}\left(\mathbb{R}^{d},\, h_{\kappa}^{2}\right),$
and
$$\varphi\star_{\kappa}f(x)=\int_{\mathbb{R}^{d}}\tau_{x}\check{\varphi}(y)f(y)h_{\kappa}^{2}(y)dy.$$
Since $\varphi$ is Dunkl positive definite function, then  $\overline{\varphi}=\check{\varphi}$, so 
 $$\varphi\star_{\kappa}f(x)=\int_{\mathbb{R}^{d}}\tau_{x}(\overline{\varphi})(y)f(y)h_{\kappa}^{2}(y)dy.$$
Thus, for $f \in L^{2}\left(\mathbb{R}^{d},\, h_{\kappa}^{2}\right)$,

$$<\varphi \star_\kappa f,f>=\int_{\mathbb{R}^{d}}\int_{\mathbb{R}^{d}}\tau_{x}(\overline{\varphi})(y)f(y)\overline{f(x)}h_{\kappa}^{2}(y)dy h_{\kappa}^{2}(x)dx.$$
Let $f\in \mathcal{S}(\mathbb{R}^{d})$, its known that, for $\epsilon>0$, there exists a closed cube $W\subseteq\mathbb{R}^{d}$, such that

$$\bigg|\int_{\mathbb{R}^{d}}\int_{\mathbb{R}^{d}}\tau_{x}(\overline{\varphi})(y)f(y)\overline{f(x)}h_{\kappa}^{2}(y)dy h_{\kappa}^{2}(x)dx-\int_{W}\int_{W}\tau_{x}(\overline{\varphi})(y)f(y)\overline{f(x)}h_{\kappa}^{2}(y)dy h_{\kappa}^{2}(x)dx\bigg|<\frac{\epsilon}{2}.$$
But the double integral over the cubes is the limit of Riemannian sums. Hence, we can find $x_{1},\,...,\, x_{N}\in\mathbb{R}^{d}$ and weights $\omega_{1},\,...,\, \omega_{N}$ such that
$$\bigg| \int_{W}\int_{W}\tau_{x}(\overline{\varphi})(y)f(y)\overline{f(x)}h_{\kappa}^{2}(y)dy h_{\kappa}^{2}(x)dx-\sum_{j,l=1}^{N}\tau_{x_{j}}\varphi(x_{l})f(x_{j})\omega_{j}\overline{f(x_{l})\omega_{l}} \bigg|<\frac{\epsilon}{2}.$$
This means that
$$\int_{\mathbb{R}^{d}}\int_{\mathbb{R}^{d}}\tau_{x}(\overline{\varphi})(y)f(y)\overline{f(x)}h_{\kappa}^{2}(y)dy h_{\kappa}^{2}(x)dx>\sum_{j,l=1}^{N}\tau_{x_{j}}\varphi(x_{l})f(x_{j})\omega_{j}\overline{f(x_{l})\omega_{l}}-\epsilon.$$
Letting $\epsilon$ tend to zero and using that $\varphi$ is Dunkl positive definite function shows that (\ref{e1}) is true for all $f\in L^{2}\left(\mathbb{R}^{d},\, h_{\kappa}^{2}\right).$
\end{proof}
\begin{coro}
Let $\varphi\in\mathcal{A}_{\kappa}(\mathbb{R}^{d})$ be a Dunkl positive definite function, we define $\lambda:\,\mathcal{S}(\mathbb{R}^{d})\longrightarrow\mathbb{C}$
by 
\begin{equation}\label{c1}
\lambda(\gamma)=\int_{\mathbb{R}^{d}}\varphi(x)D_{\kappa}^{-1}(\gamma)(x)h_{\kappa}^{2}(x)dx.
\end{equation}
If $\gamma=|\psi|^{2}$ with $\psi\in\mathcal{S}(\mathbb{R}^{d})$
and even, then $\lambda(\gamma)$ is nonnegative.
\end{coro}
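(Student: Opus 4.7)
The plan is to reduce the claim to Proposition \ref{p1} by choosing the test function $f := D_{\kappa}^{-1}(\psi)$. Since the Dunkl transform is an automorphism of $\mathcal{S}(\mathbb{R}^{d})$, $f$ lies in $\mathcal{S}(\mathbb{R}^{d}) \subset L^{2}(\mathbb{R}^{d},h_{\kappa}^{2})$ and $D_{\kappa}f=\psi$. Proposition \ref{p1} then guarantees that $\langle \varphi\star_{\kappa}f,f\rangle \geq 0$, and the remaining task is to identify this inner product with $\lambda(|\psi|^{2})$.

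First I would compute $\langle \varphi\star_{\kappa}f,f\rangle$ on the Dunkl frequency side: the convolution-to-multiplication rule $D_{\kappa}(\varphi\star_{\kappa}f)=D_{\kappa}\varphi\cdot\psi$, combined with polarization of the $L^{2}$ Plancherel isometry, gives
\[
\langle \varphi\star_{\kappa}f,f\rangle=\int_{\mathbb{R}^{d}}D_{\kappa}\varphi(\xi)\,|\psi(\xi)|^{2}\,h_{\kappa}^{2}(\xi)\,d\xi,
\]
which is real by Corollary \ref{c11}. Next I would move $\lambda(|\psi|^{2})$ into the same shape using the self-duality relation $\int D_{\kappa}u\cdot v\,h_{\kappa}^{2}=\int u\cdot D_{\kappa}v\,h_{\kappa}^{2}$ (item 5 of Theorem 1) with $u=\varphi$ and $v=|\psi|^{2}$, to get
\[
\int_{\mathbb{R}^{d}} D_{\kappa}\varphi\cdot|\psi|^{2}\,h_{\kappa}^{2}=\int_{\mathbb{R}^{d}}\varphi\cdot D_{\kappa}(|\psi|^{2})\,h_{\kappa}^{2}.
\]

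The two expressions then match provided $D_{\kappa}(|\psi|^{2})=D_{\kappa}^{-1}(|\psi|^{2})$, and this is where the evenness of $\psi$ enters. Since $|\psi|^{2}$ is even, a change of variables $y\mapsto -y$ in the defining integral of $D_{\kappa}$, together with the identity $E_{\kappa}(ix,-y)=E_{\kappa}(-ix,y)$ (property 2 of the Dunkl kernel with $\lambda=-1$) and the invariance $h_{\kappa}(-y)=h_{\kappa}(y)$, shows that $D_{\kappa}$ preserves evenness. On the other hand, $D_{\kappa}^{2}u(x)=u(-x)$ (item 3 of Theorem 1) gives $D_{\kappa}^{-1}(h)(x)=D_{\kappa}(h)(-x)$, so on even functions $D_{\kappa}$ and $D_{\kappa}^{-1}$ agree. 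Chaining the equalities one obtains
\[
\lambda(|\psi|^{2})=\int_{\mathbb{R}^{d}}\varphi\cdot D_{\kappa}^{-1}(|\psi|^{2})\,h_{\kappa}^{2} =\int_{\mathbb{R}^{d}} D_{\kappa}\varphi\cdot|\psi|^{2}\,h_{\kappa}^{2} =\langle \varphi\star_{\kappa}f,f\rangle\geq 0.
\]

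I expect the main obstacle to be the parity bookkeeping just described. Unlike the scalar Fourier transform, $D_{\kappa}$ interacts non-trivially with the reflection $x\mapsto -x$ through both the kernel $E_{\kappa}$ and the weight $h_{\kappa}^{2}$, so the preservation of evenness (and hence the identification $D_{\kappa}=D_{\kappa}^{-1}$ on even functions) has to be verified carefully. This is the only step in which the hypothesis that $\psi$ is even is actually used.
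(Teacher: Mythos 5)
Your proof is correct, and it reaches the paper's own reduction target -- the same choice $f=D_{\kappa}^{-1}(\psi)$ and the same appeal to Proposition \ref{p1} -- but the way you identify $\lambda(\gamma)$ with $<\varphi\star_{\kappa}f,f>$ is genuinely different. The paper works on the physical side: it writes $\gamma=\psi\overline{\psi}=D_{\kappa}\bigl(f\star_{\kappa}\overline{f}\bigr)$, so that $D_{\kappa}^{-1}(\gamma)=(f\star_{\kappa}\overline{f})(-\cdot)$, and then unwinds the double integral by Fubini together with the translation identity $\tau_{-x}f(y)=\tau_{-y}f(x)$ and the evenness of $f$, arriving at $<\varphi\star_{\kappa}f,f>$ after several changes of variables. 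You instead stay entirely on the transform side: Plancherel plus the convolution theorem give $<\varphi\star_{\kappa}f,f>=\int_{\mathbb{R}^{d}}D_{\kappa}\varphi\,|\psi|^{2}\,h_{\kappa}^{2}$, while the multiplication formula (Theorem 1, item 5) and the observation that $D_{\kappa}$ and $D_{\kappa}^{-1}$ coincide on even functions turn $\lambda(\gamma)$ into the same integral. Your route buys some robustness: it avoids the Fubini interchanges and the pointwise manipulation of Dunkl translations (delicate objects, defined only a.e.\ on $L^{2}$), and it also makes transparent the paper's implicit use of evenness -- in the paper it is needed for $\overline{D_{\kappa}f}=D_{\kappa}\overline{f}$ and for $f(-y)=f(y)$ at the end of the chain, whereas you isolate it in the single clean identity $D_{\kappa}^{-1}=D_{\kappa}$ on even functions (indeed you only need $|\psi|^{2}$ even). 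What the paper's route buys in exchange is the explicit factorization $\gamma=D_{\kappa}(f\star_{\kappa}\overline{f})$, which exhibits $\gamma$ as the transform of a positive-type self-convolution and ties the corollary more visibly to the mechanism behind Proposition \ref{p1}. Both arguments are sound.
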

\begin{proof}
Put $f=D_\kappa^{-1}(\psi)$. Since $\psi$ is even, then $f$ and $D_\kappa(f)$ are even, and\\ 
$D_\kappa(f)(x)=\psi(x).$\\
Thus,
$$\gamma(x)=\psi(x)\overline{\psi(x)}=D_{\kappa}f(x).\overline{D_{\kappa}f(x)}=D_{\kappa}f(x).D_{\kappa}\overline{f}(x)=D_{\kappa}\left(f\star_{\kappa}\overline{f}\right)(x).$$
Then,
\begin{align*}
\lambda(\gamma)&=\int_{\mathbb{R}^{d}}\varphi(x)D_{\kappa}^{-1}(\gamma)(x)h_{\kappa}^{2}(x)dx\\
&=\int_{\mathbb{R}^{d}}\varphi(x)\left(f\star_{\kappa}\overline{f}\right)(-x)h_{\kappa}^{2}(x)dx\\
&=\int_{\mathbb{R}^{d}}\varphi(x)\left[\int_{\mathbb{R}^{d}}\tau_{-x}f(y)\overline{f(y)}h_{\kappa}^{2}(y)dy\right]h_{\kappa}^{2}(x)dx\\
&=\int_{\mathbb{R}^{d}}\overline{f(y)}\left[\int_{\mathbb{R}^{d}}\varphi(x)\tau_{-x}f(y)h_{\kappa}^{2}(x)dx\right]h_{\kappa}^{2}(y)dy\\
&=\int_{\mathbb{R}^{d}}\overline{f(y)}\left[\int_{\mathbb{R}^{d}}\varphi(x)\tau_{-y}f(x)h_{\kappa}^{2}(x)dx\right]h_{\kappa}^{2}(y)dy\\
&=\int_{\mathbb{R}^{d}}\overline{f(y)}\left(\varphi\star_{\kappa}f\right)(-y)h_{\kappa}^{2}(y)dy\\
&=\int_{\mathbb{R}^{d}}\overline{f(-y)}\left(\varphi\star_{\kappa}f\right)(y)h_{\kappa}^{2}(y)dy\\
&=\int_{\mathbb{R}^{d}}\overline{f(y)}\left(\varphi\star_{\kappa}f\right)(y)h_{\kappa}^{2}(y)dy\\
&=<\varphi\star_{\kappa}f,f>\geq 0.
\end{align*}
\end{proof}
\begin{propo}\label{p2}
Let $\varphi \in\mathcal{A}_{\kappa}(\mathbb{R}^{d})$. If $\varphi$ is a Dunkl positive definite function, then 
$$D_{\kappa}\varphi(x)\geq 0,\;\;\forall x\in \mathbb{R}^{d}.$$
\end{propo}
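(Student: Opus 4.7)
The plan is to combine Proposition \ref{p1} with the Plancherel identity for the Dunkl transform and reduce the desired pointwise positivity of $D_\kappa\varphi$ to a standard contradiction argument. First I would establish, for every $f\in L^{2}(\mathbb{R}^{d},h_{\kappa}^{2})$, the quadratic-form identity
\[
<\varphi\star_\kappa f,\, f> \;=\; \int_{\mathbb{R}^d} D_\kappa\varphi(\xi)\,|D_\kappa f(\xi)|^{2}\, h_\kappa^{2}(\xi)\,d\xi.
\]
This follows by applying the Plancherel isometry for $D_\kappa$ (Theorem 1, part 4) together with the convolution identity $D_\kappa(\varphi\star_\kappa f) = D_\kappa\varphi\cdot D_\kappa f$ (Theorem 4, part 1(a)), and then pulling the real factor $D_\kappa\varphi$ out of the conjugate, where the reality of $D_\kappa\varphi$ is exactly Corollary \ref{c11}. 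Note the convolution theorem applies because $\varphi\in\mathcal{A}_\kappa\subset L^{1}\cap L^{\infty}\subset L^{2}$. By Proposition \ref{p1}, the left-hand side is nonnegative for every $f\in L^{2}$.

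Next, since $\varphi\in L^{1}(\mathbb{R}^{d},h_{\kappa}^{2})$, Theorem 1(1) gives $D_\kappa\varphi\in C_0(\mathbb{R}^{d})$, so $D_\kappa\varphi$ is continuous. Suppose, toward a contradiction, that $D_\kappa\varphi(x_0)<0$ for some $x_0\in\mathbb{R}^{d}$. Continuity furnishes an open ball $U\ni x_0$ on which $D_\kappa\varphi<0$; since the union of reflecting hyperplanes $\{x:\,<x,\alpha>=0,\,\alpha\in R_+\}$ has Lebesgue measure zero, $h_\kappa^{2}$ is strictly positive almost everywhere on $U$. Pick any nonzero $\psi\in C_c^{\infty}(U)\subset\mathcal{S}(\mathbb{R}^{d})$ and set $f = D_\kappa^{-1}(\psi)\in\mathcal{S}(\mathbb{R}^{d})\subset L^{2}$, which is well defined because $D_\kappa$ is an automorphism of $\mathcal{S}(\mathbb{R}^{d})$ (Theorem 1, part 3). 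Then $|D_\kappa f|^{2}=|\psi|^{2}$ is a nonzero nonnegative function supported in $U$, hence
\[
\int_{\mathbb{R}^d} D_\kappa\varphi(\xi)\,|\psi(\xi)|^{2}\, h_\kappa^{2}(\xi)\,d\xi \;<\; 0,
\]
contradicting the nonnegativity obtained in the first step. Therefore $D_\kappa\varphi(x)\geq 0$ for every $x\in\mathbb{R}^{d}$.

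The main obstacle is establishing the quadratic-form identity on the transform side; once this identity is clean, the rest is a routine contradiction argument. The key structural fact I rely on is that $D_\kappa$ is a bijection of $\mathcal{S}(\mathbb{R}^{d})$, which guarantees that $|D_\kappa f|^{2}$ can be prescribed to concentrate in an arbitrarily small open set, together with the nondegeneracy of the weight $h_\kappa^{2}$ off a null set.
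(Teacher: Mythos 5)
Your proof is correct, and it follows the same basic strategy as the paper: reduce the statement to the nonnegativity of $\int_{\mathbb{R}^d} D_\kappa\varphi(\xi)\,|D_\kappa f(\xi)|^2 h_\kappa^2(\xi)\,d\xi$ for arbitrary test functions $f$, using the positivity of the quadratic form $<\varphi\star_\kappa f,f>$ from Proposition \ref{p1} and the reality of $D_\kappa\varphi$ from Corollary \ref{c11}. The differences are in execution, and they work in your favour. The paper obtains the key identity by expanding $D_\kappa f$ and $\overline{D_\kappa f}$ as integrals against the kernel $E_\kappa$, interchanging integrals, and recognizing $\tau_x\varphi(y)$ via the inversion formula, which produces the double integral $c_\kappa^2\int\!\!\int f(x)\overline{f(y)}\,\tau_x\varphi(y)\,h_\kappa^2(x)h_\kappa^2(y)\,dx\,dy$; you instead get the same identity in one line from the Plancherel isometry and $D_\kappa(\varphi\star_\kappa f)=D_\kappa\varphi\cdot D_\kappa f$, which avoids the Fubini bookkeeping (note that the Parseval relation alone gives the identity; the reality of $D_\kappa\varphi$ is only needed at the end, to interpret the conclusion). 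More significantly, the paper ends with ``since the inequality holds for arbitrary $f$, we conclude,'' leaving the passage from the integral inequality to pointwise nonnegativity implicit; you supply exactly this missing localization step, using continuity of $D_\kappa\varphi$ (Theorem 1(1)), the surjectivity of $D_\kappa$ on $\mathcal{S}(\mathbb{R}^d)$ to prescribe $|D_\kappa f|^2=|\psi|^2$ with $\psi\in C_c^\infty(U)$, and the fact that $h_\kappa^2>0$ off a null set. So your argument is a cleaner and more complete version of the paper's proof rather than a different one.
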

\begin{proof}
For $\varphi, f\in\mathcal{A}_{\kappa}(\mathbb{R}^{d})$, we have
\begin{equation*}
\begin{split}
\int_{\mathbb{R}^{d}}|D_{\kappa}(f)|^{2}(\xi)D_{\kappa}(\varphi)(\xi)h_{\kappa}^{2}(\xi)d\xi
&=\int_{\mathbb{R}^{d}}D_{\kappa}(f)(\xi)\overline{D_{\kappa}(f)(\xi)}D_{\kappa}(\varphi)(\xi)h_{\kappa}^{2}(\xi)d\xi\\
&=c_{\kappa}^{2}\int_{\mathbb{R}^{d}}D_{\kappa}(\varphi)(\xi)\left[\int_{\mathbb{R}^{d}}f(x)E_{\kappa}(x,-i\xi)h_{\kappa}^{2}(x)dx\right]\\
&\times\left[\int_{\mathbb{R}^{d}}\overline{f(y)}E_{\kappa}(y,\, i\xi)h_{\kappa}^{2}(y)dy\right]h_{\kappa}^{2}(\xi)d\xi\\
&=c_{\kappa}^{2}\int_{\mathbb{R}^{d}}\int_{\mathbb{R}^{d}}f(x)\overline{f(y)}\left[\int_{\mathbb{R}^{d}}E_{\kappa}(x,-i\xi)E_{\kappa}(y,\, i\xi)D_{\kappa}(\varphi)(\xi)h_{\kappa}^{2}(\xi)d\xi\right]\\
&\times h_{\kappa}^{2}(x)dxh_{\kappa}^{2}(y)dy\\
&=c_{\kappa}^{2}\int_{\mathbb{R}^{d}}\int_{\mathbb{R}^{d}}f(x)\overline{f(y)}\tau_{x}\varphi(y)h_{\kappa}^{2}(x)dx h_{\kappa}^{2}(y)dy\\
&=c_{\kappa}^{2} <\overline{\varphi}\star_\kappa f,f >\geq 0.
\end{split}
\end{equation*}
Since $\varphi$ is Dunkl positive definite then $D_{\kappa} \varphi$ is real. Since the last inequality holds for an arbtitrary function $f\in\mathcal{A}_{\kappa}(\mathbb{R}^{d}),$ we conclude.
\end{proof}
\begin{coro}
Let $\varphi \in\mathcal{A}_{\kappa}(\mathbb{R}^{d})$. If $\varphi$ is a Dunkl positive definite function, then $\varphi$ is bounded and 
$$|\varphi(x)|\leq\varphi(0),\;\;\forall x\in\mathbb{R}^{d}.$$
\end{coro}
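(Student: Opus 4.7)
My plan is to deduce the bound directly from Dunkl Fourier inversion, using the nonnegativity of $D_\kappa\varphi$ established in Proposition \ref{p2} together with the kernel estimate from Proposition 2.

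\textbf{Step 1: Set up the inversion formula.} Since $\varphi\in\mathcal{A}_\kappa(\mathbb{R}^d)$, both $\varphi$ and $D_\kappa\varphi$ lie in $L^1(\mathbb{R}^d,h_\kappa^2)$, so item (2) of Theorem 1 yields
\begin{equation*}
\varphi(x)=c_\kappa\int_{\mathbb{R}^d}D_\kappa\varphi(\xi)\,E_\kappa(ix,\xi)\,h_\kappa^2(\xi)\,d\xi\qquad\text{a.e. in }x.
\end{equation*}
Since $\varphi$ is continuous (being Dunkl positive definite by hypothesis) and the right-hand side is continuous in $x$ by dominated convergence (using $|E_\kappa(ix,\xi)|\le 1$ from item (4) of Proposition 2 and the $L^1$ bound on $D_\kappa\varphi$), the identity holds for every $x\in\mathbb{R}^d$.

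\textbf{Step 2: Apply the triangle inequality with nonnegativity.} By the preceding Proposition \ref{p2}, $D_\kappa\varphi(\xi)\ge 0$ for all $\xi$. Combining this with $|E_\kappa(ix,\xi)|\le 1$, the triangle inequality for integrals gives
\begin{equation*}
|\varphi(x)|\le c_\kappa\int_{\mathbb{R}^d}D_\kappa\varphi(\xi)\,|E_\kappa(ix,\xi)|\,h_\kappa^2(\xi)\,d\xi\le c_\kappa\int_{\mathbb{R}^d}D_\kappa\varphi(\xi)\,h_\kappa^2(\xi)\,d\xi.
\end{equation*}

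\textbf{Step 3: Identify the right-hand side with $\varphi(0)$.} Evaluating the inversion formula at $x=0$ and using $E_\kappa(0,\xi)=V_\kappa(e^{\langle\cdot,\xi\rangle})(0)=V_\kappa(1)(0)=1$ (from the definition of $E_\kappa$ and the normalization $V_\kappa 1=1$), we obtain
\begin{equation*}
\varphi(0)=c_\kappa\int_{\mathbb{R}^d}D_\kappa\varphi(\xi)\,h_\kappa^2(\xi)\,d\xi.
\end{equation*}
Moreover, by Proposition \ref{t1}(2a), $\varphi(0)=\tau_0\varphi(0)\ge 0$, so this quantity is a genuine (nonnegative) real number. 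Combining the last two displays yields $|\varphi(x)|\le\varphi(0)$ for every $x\in\mathbb{R}^d$, which in particular shows that $\varphi$ is bounded.

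No serious obstacle is anticipated; the only subtle point is upgrading the a.e.\ inversion formula to a pointwise identity, which is justified by the continuity of both sides and is a standard use of dominated convergence with the uniform bound $|E_\kappa(ix,\xi)|\le 1$.
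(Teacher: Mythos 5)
Your argument is correct, but it takes a genuinely different route from the paper. The paper proves the bound in two stages: first it applies Definition \ref{d1} with $n=2$, $x_1=0$, $x_2=x$, $\alpha_1=|\varphi(x)|$, $\alpha_2=-\overline{\varphi(x)}$ and the relation $\overline{\varphi(-x)}=\varphi(x)$ to obtain the intermediate inequality $|\varphi(x)|\leq\tfrac12\left(\varphi(0)+\tau_x\varphi(x)\right)$, and then it bounds $\tau_x\varphi(x)$ by $\varphi(0)$ using the integral representation of the translation, the nonnegativity of $D_\kappa\varphi$ from Proposition \ref{p2}, and $|E_\kappa(ix,\xi)|\leq1$. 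You bypass the quadratic-form trick entirely: from $\varphi,D_\kappa\varphi\in L^1(\mathbb{R}^d,h_\kappa^2)$ you invoke the inversion formula, use $D_\kappa\varphi\geq0$ (again Proposition \ref{p2}) and $|E_\kappa(ix,\xi)|\leq1$ to get $|\varphi(x)|\leq c_\kappa\int D_\kappa\varphi\,h_\kappa^2$, and identify the right-hand side with $\varphi(0)$ via $E_\kappa(0,\xi)=1$. Both proofs rest on the same two analytic ingredients (nonnegativity of $D_\kappa\varphi$ and the kernel bound), so there is no circularity, and your handling of the a.e.\ inversion formula (upgrading to a pointwise identity by continuity of both sides, with dominated convergence) is sound; your version is shorter and is essentially the classical ``Bochner representation implies $|\varphi(x)|\leq\varphi(0)$'' argument, anticipating Theorem \ref{boc}. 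What the paper's longer route buys is the extra information obtained along the way, namely the inequality $|\varphi(x)|\leq\tfrac12\left(\varphi(0)+\tau_x\varphi(x)\right)$ coming purely from the definition and the separate bound $\tau_x\varphi(x)\leq\varphi(0)$ on the diagonal of the Dunkl translation, neither of which appears in your proof; also note, as a small point of care, that your chain $E_\kappa(0,\xi)=V_\kappa(e^{\langle\cdot,\xi\rangle})(0)=V_\kappa(1)(0)$ is a shortcut better justified by Proposition 1 (the normalization $f(0)=1$ for $f=E_\kappa(\cdot,y)$) or by the degree-preserving property of $V_\kappa$ applied to the series expansion.
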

\begin{proof}
In definition \ref{d1}, let $n=2,\alpha_1=|\varphi(x)|, \alpha_2=-\overline{\varphi(x)}, x_1=0$ and $x_2=x,$ we have 
$$\varphi(0)|\varphi(x)|^{2}-\varphi(-x)|\varphi(x)|\varphi(x)-\overline{\varphi(x)}|\varphi(x)|\varphi(x)+|\varphi(x)|^{2}\tau_{x}\varphi(x)\geq0.$$
Since $\overline{\varphi(-x)}=\varphi(x)$, we obtain

$$|\varphi(x)|^{2}\left[\varphi(0)-2|\varphi(x)|+\tau_{x}\varphi(x)\right]\geq0$$

i.e

\begin{equation}\label{022}
|\varphi(x)|\leq\frac{1}{2}(\varphi(0)+\tau_x\varphi(x)).
\end{equation}

Furthermore, by the definition of Dunkl translation, and since $\varphi$ is Dunkl positive definite function, we have
\begin{equation}\label{023}
\begin{split}
\tau_{x}\varphi(x)=\big|\tau_{x}\varphi(x)\big|&=\bigg|\int_{\mathbb{R}^{d}}\big|E_\kappa(ix,\xi)\big|^{2}D_\kappa\varphi(\xi)h_{\kappa}^{2}(\xi)d\xi\bigg|\\
& \leq\int_{\mathbb{R}^{d}}\big|D_{\kappa}\varphi(\xi)\big|h_{\kappa}^{2}(\xi)d\xi\\
&=\int_{\mathbb{R}^{d}}D_{\kappa}\varphi(\xi)h_{\kappa}^{2}(\xi)d\xi\\
&=\varphi(0).
\end{split}
\end{equation}
The relations (\ref{022}) and (\ref{023}) lead to 

$$|\varphi(x)|\leq\varphi(0).$$
\end{proof}
\begin{coro}
Let $\varphi_{1},\,\varphi_{2}\in\mathcal{A}_{\kappa}(\mathbb{R}^{d}).$ If $\varphi_{1},\,\varphi_{2}$ are Dunkl positive definite functions, then the convolution product $\varphi_{1}\star_{\kappa}\varphi_{2}$ is also.
\end{coro}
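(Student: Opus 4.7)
The plan is to replicate the computation of Theorem \ref{t2} with the product $D_{\kappa}\varphi_{1}\cdot D_{\kappa}\varphi_{2}$ playing the role of the nonnegative amplitude. First, applying Proposition \ref{p2} separately to $\varphi_{1}$ and $\varphi_{2}$ yields $D_{\kappa}\varphi_{1}\geq 0$ and $D_{\kappa}\varphi_{2}\geq 0$ pointwise on $\mathbb{R}^{d}$.

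Before computing, I would check that $\varphi_{1}\star_{\kappa}\varphi_{2}$ is in $\mathcal{A}_{\kappa}(\mathbb{R}^{d})$ and continuous, so that the integral representations used in the proof of Theorem \ref{t2} are available. Since $\varphi_{1},\varphi_{2}\in \mathcal{A}_{\kappa}(\mathbb{R}^{d})\subset L^{1}(\mathbb{R}^{d},h_{\kappa}^{2})\cap L^{\infty}(\mathbb{R}^{d})$, the convolution lies in $L^{1}(\mathbb{R}^{d},h_{\kappa}^{2})$; by Theorem 4(1)(a) one has $D_{\kappa}(\varphi_{1}\star_{\kappa}\varphi_{2})=D_{\kappa}\varphi_{1}\cdot D_{\kappa}\varphi_{2}$, and this is in $L^{1}(\mathbb{R}^{d},h_{\kappa}^{2})$ because $D_{\kappa}\varphi_{1}\in L^{1}$ while $D_{\kappa}\varphi_{2}\in L^{\infty}$ (via Theorem 1). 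Continuity then follows from the inversion formula and dominated convergence.

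The main computation mirrors the one in Theorem \ref{t2} verbatim. Using the integral formula underlying (11), now applied to $f=\varphi_{1}\star_{\kappa}\varphi_{2}\in\mathcal{A}_{\kappa}(\mathbb{R}^{d})$, one obtains
\[
\tau_{x_{j}}(\varphi_{1}\star_{\kappa}\varphi_{2})(x_{k}) = \int_{\mathbb{R}^{d}} E_{\kappa}(-ix_{j},\xi)\, E_{\kappa}(ix_{k},\xi)\, D_{\kappa}\varphi_{1}(\xi)\, D_{\kappa}\varphi_{2}(\xi)\, h_{\kappa}^{2}(\xi)\, d\xi.
\]
Summing against $\alpha_{j}\overline{\alpha_{k}}$, swapping the finite sums with the integral, and using property (3) of Proposition 2 in the form $E_{\kappa}(ix_{k},\xi)=\overline{E_{\kappa}(-ix_{k},\xi)}$ for real $x_{k},\xi$, the double sum assembles into a modulus squared:
\[
\sum_{j,k=1}^{n}\alpha_{j}\overline{\alpha_{k}}\,\tau_{x_{j}}(\varphi_{1}\star_{\kappa}\varphi_{2})(x_{k}) = \int_{\mathbb{R}^{d}} \Bigl|\sum_{j=1}^{n} \alpha_{j} E_{\kappa}(-ix_{j},\xi)\Bigr|^{2} D_{\kappa}\varphi_{1}(\xi)\, D_{\kappa}\varphi_{2}(\xi)\, h_{\kappa}^{2}(\xi)\, d\xi \geq 0,
\]
where nonnegativity of the integrand is furnished by the first step. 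This is exactly the Dunkl positive definiteness of $\varphi_{1}\star_{\kappa}\varphi_{2}$.

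The only non-trivial point I anticipate is the bookkeeping in the second step, namely checking that $\varphi_{1}\star_{\kappa}\varphi_{2}$ genuinely lies in $\mathcal{A}_{\kappa}(\mathbb{R}^{d})$ and is continuous, which is needed to legitimately write the translation as an absolutely convergent integral. Once this is in place the argument is a direct transcription of the proof of Theorem \ref{t2}, with Proposition \ref{p2} providing the additional pointwise nonnegativity of $D_{\kappa}\varphi_{i}$ that replaces the hypothesis of nonnegativity on $\varphi$ there.
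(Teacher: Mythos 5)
Your proposal is correct and follows essentially the same route as the paper: verify that $\varphi_{1}\star_{\kappa}\varphi_{2}\in\mathcal{A}_{\kappa}(\mathbb{R}^{d})$ with $D_{\kappa}(\varphi_{1}\star_{\kappa}\varphi_{2})=D_{\kappa}\varphi_{1}\cdot D_{\kappa}\varphi_{2}$, write the double sum via the translation formula as the integral of $\bigl|\sum_{j}\alpha_{j}E_{\kappa}(-ix_{j},\xi)\bigr|^{2}D_{\kappa}\varphi_{1}(\xi)D_{\kappa}\varphi_{2}(\xi)$ against $h_{\kappa}^{2}$, and conclude by the pointwise nonnegativity of each $D_{\kappa}\varphi_{i}$ from Proposition \ref{p2}. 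Your extra bookkeeping (the $L^{1}$--$L^{\infty}$ argument for $D_{\kappa}\varphi_{1}\cdot D_{\kappa}\varphi_{2}\in L^{1}$) is a welcome refinement of a step the paper only asserts.
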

\begin{proof}
For $\varphi_{1},\,\varphi_{2}\in\mathcal{A}_{\kappa}(\mathbb{R}^{d})$, we have
$$\varphi_{1}\star_\kappa \varphi_{2}\in L^{1}\left(\mathbb{R}^{d},\, h_{\kappa}^{2}\right),$$
and 
$$D_{\kappa}\left(\varphi_{1}\star_{\kappa}\varphi_{2}\right)=D_{\kappa}\varphi_{1}.D_{\kappa}\varphi_{2}
\in L^{1}\left(\mathbb{R}^{d},\, h_{\kappa}^{2}\right).$$
Now, for every complex numbers $\alpha_{1},\,...\,\,\alpha_{n}$ and for every distinct real numbers $x_{1},\,...\,,\, x_{n}$, we have
\begin{equation*}\label{rr}
\begin{split} 
\sum_{j=1}^{n}\sum_{l=1}^{n}\alpha_{j}\overline{\alpha_{l}}\tau_{x_{j}}\left(\varphi_{1}\star_{\kappa}\varphi_{2}\right)(x_{l})&=\int_{\mathbb{R}^{d}}\sum_{j=1}^{N}\sum_{l=1}^{N}\alpha_{j}\overline{\alpha_{l}}E_{\kappa}\left(ix_{j},\xi\right)E_{\kappa}\left(-ix_{l},\xi\right)D_{\kappa}\left(\varphi_{1}\star_{\kappa}\varphi_{2}\right)(\xi)h_{k}^{2}(\xi)d\xi\\
&=\int_{\mathbb{R}^{d}}\bigg|\sum_{j=1}^{n}\alpha_{j}E_{\kappa}\left(ix_{j},\xi\right)\bigg|^{2}D_{\kappa}\left(\varphi_{1}\right)(\xi)D_{\kappa}\left(\varphi_{2}\right)(\xi)h_{k}^{2}(\xi)d\xi\\
&\geq 0,
\end{split}
\end{equation*}
where the last inequality follows from Proposition \ref{p2}.
\end{proof}

\begin{propo}\label{p5}
Let $\varphi\in\mathcal{A}_{\kappa}(\mathbb{R}^{d})$ be a radial and Dunkl positive definite function. If $f\in\mathcal{A}_{\kappa}(\mathbb{R}^{d})$ is a positive radial function, then the product $\varphi D_\kappa f$ is a Dunkl positive definite function.
\end{propo}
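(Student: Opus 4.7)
The plan is to exhibit a nonnegative $g\in\mathcal{A}_{\kappa}(\R^{d})$ whose Dunkl transform equals $\varphi\cdot D_{\kappa}f$; Theorem~\ref{t2} will then yield at once that $\varphi\cdot D_{\kappa}f$ is Dunkl positive definite. Motivated by the convolution theorem (Theorem~4), the natural candidate is
$$g := D_{\kappa}\varphi\star_{\kappa}f.$$
By that theorem together with the identity $D_{\kappa}^{2}\varphi(x)=\varphi(-x)$ from Theorem~1(3),
$$D_{\kappa}g \;=\; D_{\kappa}^{2}\varphi\cdot D_{\kappa}f \;=\; \varphi^{\vee}\cdot D_{\kappa}f,$$
and the radiality of $\varphi$ forces $\varphi^{\vee}=\varphi$, so $D_{\kappa}g=\varphi\cdot D_{\kappa}f$ as desired.

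Nonnegativity of $g$ is the heart of the argument and is where the radiality of $f$ enters. Writing the convolution in translation form,
$$g(x) \;=\; \int_{\R^{d}} D_{\kappa}\varphi(y)\,\tau_{x}f^{\vee}(y)\,h_{\kappa}^{2}(y)\,dy,$$
I would invoke three ingredients: radiality of $f$ gives $f^{\vee}=f$; Theorem~3 applied to the radial nonnegative function $f$ gives $\tau_{x}f\ge 0$; and Proposition~\ref{p2}, which is applicable because $\varphi$ is Dunkl positive definite, gives $D_{\kappa}\varphi\ge 0$. The integrand is therefore pointwise nonnegative, hence $g\ge 0$.

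It remains to check that $g\in\mathcal{A}_{\kappa}(\R^{d})$. The transform side is immediate: $\varphi\in L^{1}(\R^{d},h_{\kappa}^{2})$ since $\varphi\in\mathcal{A}_{\kappa}$, and $D_{\kappa}f\in L^{\infty}$ by Theorem~1(1), whence $D_{\kappa}g=\varphi\cdot D_{\kappa}f\in L^{1}$. For $g$ itself, using $g\ge 0$ and Fubini reduces $\|g\|_{\kappa,1}$ to
$$\int_{\R^{d}} D_{\kappa}\varphi(y)\Bigl(\int_{\R^{d}}\tau_{x}f(y)\,h_{\kappa}^{2}(x)\,dx\Bigr)h_{\kappa}^{2}(y)\,dy,$$
after which the inner integral is rewritten via $\tau_{x}f(y)=\tau_{-y}f(-x)$ (Theorem~2(2)) and the evenness of $h_{\kappa}^{2}$ as $\int_{\R^{d}} \tau_{-y}f(u)\,h_{\kappa}^{2}(u)\,du$, which equals $\|f\|_{\kappa,1}$ by the invariance part of Theorem~3. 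Thus $\|g\|_{\kappa,1}=\|D_{\kappa}\varphi\|_{\kappa,1}\|f\|_{\kappa,1}<\infty$, and Theorem~\ref{t2} closes the argument. The main technical obstacle is precisely this last integrability check, in which the radial-invariance property of Dunkl translation does the essential work; everything else is mechanical once Proposition~\ref{p2} is in hand.
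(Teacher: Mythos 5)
Your proof is correct and follows essentially the same route as the paper: both hinge on writing $\varphi\,D_{\kappa}f$ as the Dunkl transform of $D_{\kappa}\varphi\star_{\kappa}f$ and deducing nonnegativity of that convolution from Proposition~\ref{p2} together with the radial translation theorem (Theorem~3). The only difference is presentational: the paper re-expands the quadratic form directly (in effect inlining the proof of Theorem~\ref{t2}), whereas you invoke Theorem~\ref{t2} and supply the $L^{1}$-membership check of $D_{\kappa}\varphi\star_{\kappa}f$ that the paper leaves implicit.
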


\begin{proof}
Since $\varphi,f\in\mathcal{A}_{\kappa}(\mathbb{R}^{d})$, and radials we have 
$$\varphi D_\kappa f\in\mathcal{A}_{\kappa}(\mathbb{R}^{d}), $$
and radial function.
Thus,
\begin{equation*}
\begin{split}
\sum_{j=1}^{n}\sum_{l=1}^{n}\alpha_{j}\overline{\alpha_{l}}\tau_{x_{j}}(\varphi D_{\kappa}f)(x_{l})&=\int_{\mathbb{R}^{d}}\bigg|\sum_{j=1}^{n}\alpha_{j}E_{\kappa}(ix_{j},\,\xi)\bigg|^{2}D_{\kappa}\left(\varphi D_\kappa f\right)(\xi)h_{\kappa}^{2}(\xi)d\xi\\
&=\int_{\mathbb{R}^{d}}\bigg|\sum_{j=1}^{n}\alpha_{j}E_{\kappa}(ix_{j},\,\xi)\bigg|^{2}D_{\kappa}\varphi\star_{\kappa}f(\xi)h_{\kappa}^{2}(\xi)d\xi.
\end{split}
\end{equation*}
Moreover, by the definition of Dunkl convolution, we can write
\begin{equation}{\label{aa}}
D_{\kappa}\varphi\star_{\kappa}f(x)=\int_{\mathbb{R}^{d}}D_{\kappa}\varphi(t)\tau_{x}\check{f}(t)h_{\kappa}^{2}(t)dt.
\end{equation}
From proposition \ref{p2} and theorem 3.4 in [10], we have 
$$D_{\kappa}\varphi\star_{\kappa}f(x)\geq0.$$
Which completes the proof.
\end{proof}
\begin{coro}
Let $\varphi_{1},\varphi_{2}\in\mathcal{A}_{\kappa}(\mathbb{R}^{d})$ are radials. If $\varphi_{1},\varphi_{2}$ are Dunkl positive definite  functions,
then the product $\varphi_{1}\varphi_{2}$ is also.
\end{coro}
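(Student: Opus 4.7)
The plan is to reduce this corollary to Proposition \ref{p5} by choosing the function $f$ appearing there to be the Dunkl transform of $\varphi_2$. The key observation is that $D_\kappa$ intertwines the two hypotheses nicely: Proposition \ref{p2} forces $D_\kappa\varphi_2$ to be pointwise nonnegative, while $\varphi_2 \in \mathcal{A}_\kappa$ forces $D_\kappa \varphi_2 \in L^1$, and these are precisely the ingredients Proposition \ref{p5} demands.

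More concretely, set $f := D_\kappa \varphi_2$. First I would verify $f \in \mathcal{A}_\kappa(\mathbb{R}^d)$: by definition of $\mathcal{A}_\kappa$ we have $f = D_\kappa \varphi_2 \in L^1(\mathbb{R}^d, h_\kappa^2)$, and by Theorem 1 (3) together with the fact that $\varphi_2$ is radial (hence even), we get
\[
D_\kappa f(x) = D_\kappa^2 \varphi_2(x) = \varphi_2(-x) = \varphi_2(x) \in L^1(\mathbb{R}^d, h_\kappa^2).
\]
Next I would check that $f$ is radial (the Dunkl transform of a radial $L^1$ function is radial, since $D_\kappa$ commutes with the $G$-action and for radial functions reduces to a Hankel-type transform) and nonnegative (this is exactly Proposition \ref{p2} applied to the Dunkl positive definite function $\varphi_2$).

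Once these hypotheses are in place, Proposition \ref{p5} applied to the radial Dunkl positive definite function $\varphi_1$ and the radial positive function $f \in \mathcal{A}_\kappa$ yields that $\varphi_1 \cdot D_\kappa f$ is Dunkl positive definite. But $D_\kappa f = \varphi_2$ by the computation above, so $\varphi_1 \varphi_2$ is Dunkl positive definite, which is the desired conclusion.

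The only subtle point is the radiality of $D_\kappa \varphi_2$ and the convention that "positive" in the hypothesis of Proposition \ref{p5} means "nonnegative" (otherwise one would lose the conclusion when $D_\kappa \varphi_2$ vanishes on a set of positive measure, but the proof of Proposition \ref{p5} only uses nonnegativity via the positivity of the Dunkl translation of a nonnegative radial function from Theorem 3). No further computation is needed; the corollary is really just the composition of Proposition \ref{p5}, Proposition \ref{p2} and the Dunkl inversion formula.
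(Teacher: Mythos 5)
Your proposal is correct and follows essentially the same route as the paper: set $\psi=D_\kappa\varphi_2$, note it is radial and nonnegative by Proposition \ref{p2}, and apply Proposition \ref{p5} to $\varphi_1$ and $\psi$. You are in fact slightly more careful than the paper in verifying $\psi\in\mathcal{A}_\kappa(\mathbb{R}^d)$ and in using evenness of the radial $\varphi_2$ to pass from $D_\kappa^2\varphi_2(x)=\varphi_2(-x)$ to $\varphi_2(x)$.
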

\begin{proof}
Let $\psi=D_{\kappa}\varphi_{2}$, then $D_{\kappa}\psi=\varphi_{2},$ and since $\varphi_{2}$ is radial, we have $\psi$ is radial.
So, by proposition \ref{p2}, we have 
$$\psi\geq 0.$$  
By proposition \ref{p5}, we conclude.
\end{proof}

\noindent In the following we state a version of  Bochner's theorem in Dunkl setting
and we establish a necessary and sufficient condition for a function to be a Dunkl
positive definite.
\begin{theorem}(Bochner)\label{boc}
Let $\varphi\in \mathcal{A}_{\kappa}(\mathbb{R}^{d}).$ Then, $\varphi$ is Dunkl positive
definite, if and only if, there exist a nonnegative function $\psi\in \mathcal{A}_{\kappa}(\mathbb{R}^{d})$ such that
\begin{equation}
\varphi=D_{\kappa}\psi.
\end{equation}
\end{theorem}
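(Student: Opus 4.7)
The plan is to establish the two implications separately. The ``if'' direction is immediate: if $\psi \in \mathcal{A}_{\kappa}(\mathbb{R}^{d})$ is nonnegative and $\varphi = D_{\kappa}\psi$, then Theorem~\ref{t2} applied to $\psi$ tells us that $\varphi$ is Dunkl positive definite, so no further work is needed there.

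For the converse, I start from the observation that if $\varphi\in\mathcal{A}_{\kappa}(\mathbb{R}^{d})$ is Dunkl positive definite, then Proposition~\ref{p2} already forces $D_{\kappa}\varphi(\xi)\geq 0$ for every $\xi\in\mathbb{R}^{d}$, and by the very definition of $\mathcal{A}_{\kappa}(\mathbb{R}^{d})$ we have $D_{\kappa}\varphi\in L^{1}(\mathbb{R}^{d},h_{\kappa}^{2})$. The natural candidate for the witness is therefore
\[\psi(x) := D_{\kappa}\varphi(-x),\qquad x\in\mathbb{R}^{d}.\]
Since $h_{\kappa}^{2}(-y)=h_{\kappa}^{2}(y)$ follows at once from the definition of $h_{\kappa}$, the substitution $y\mapsto -y$ preserves the measure $h_{\kappa}^{2}(y)\,dy$, and hence $\psi$ is nonnegative and lies in $L^{1}(\mathbb{R}^{d},h_{\kappa}^{2})$.

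It remains to verify that $D_{\kappa}\psi = \varphi$. Writing out the Dunkl transform of $\psi$, performing the change of variable $y\mapsto -y$, and using the identity $E_{\kappa}(-ix,-y)=E_{\kappa}(ix,y)$ that follows from item~(2) of Proposition~2, I obtain
\[D_{\kappa}\psi(x) = c_{\kappa}\int_{\mathbb{R}^{d}} D_{\kappa}\varphi(y)\,E_{\kappa}(ix,y)\,h_{\kappa}^{2}(y)\,dy,\]
and the right-hand side equals $\varphi(x)$ by the Dunkl inversion formula (Theorem~1, part~(2)), which is applicable precisely because $\varphi$ and $D_{\kappa}\varphi$ both lie in $L^{1}(\mathbb{R}^{d},h_{\kappa}^{2})$; the equality holds pointwise since $\varphi$ is continuous and the right side lies in $C_{0}(\mathbb{R}^{d})$ by Theorem~1(1). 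This simultaneously gives $\varphi = D_{\kappa}\psi$ and $D_{\kappa}\psi \in L^{1}(\mathbb{R}^{d},h_{\kappa}^{2})$, so $\psi\in\mathcal{A}_{\kappa}(\mathbb{R}^{d})$ as required.

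The main substantive input is Proposition~\ref{p2} (pointwise nonnegativity of $D_{\kappa}\varphi$ when $\varphi$ is Dunkl positive definite), which has already been proved earlier in the paper; once that is in hand, the remainder of the argument is straightforward bookkeeping with the inversion formula and the parity symmetries of $h_{\kappa}^{2}$ and $E_{\kappa}$, so no genuine obstacle is expected.
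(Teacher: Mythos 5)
Your proof is correct and follows essentially the same route as the paper: Theorem \ref{t2} gives the ``if'' direction, and Proposition \ref{p2} combined with the Dunkl inversion formula gives the ``only if'' direction. The only difference is your choice $\psi(x)=D_{\kappa}\varphi(-x)$ rather than the paper's $\psi=D_{\kappa}\varphi$; since $D_{\kappa}^{2}f(x)=f(-x)$, your reflected candidate handles the parity bookkeeping explicitly, whereas the paper's choice implicitly relies on $D_{\kappa}\varphi$ being even (which Corollary \ref{c11} alone does not establish), so your version is, if anything, slightly more careful.
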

\begin{proof}
Since $\varphi$ is Dunkl positive definite function, we have $\overline{\varphi(-x)}=\varphi(x),$ and $D_{\kappa}\varphi$ is even real function (see corollary \ref{c11}). By the inversion formula we have
$$\varphi(x)=D_{\kappa}^{2}\varphi(x),\;\;\forall x\in\mathbb{R}^{d}.$$
Let
$$\psi(x)=D_\kappa\varphi(x).$$
By proposition \ref{p2}, we deduce that $\psi$ is nonnegative function of $\mathcal{A}_{\kappa}(\mathbb{R}^{d}).$\\
Inversely, since $\psi$ is nonnegative function and belong to $\mathcal{A}_{\kappa}(\mathbb{R}^{d}),$ by theorem \ref{t2} we deduce that $\varphi=D_\kappa\psi$ is Dunkl positive definite function.
\end{proof}
\subsection{Applications}
\begin{propo}\label{p4}
Let $\varphi\in\mathcal{A}_{\kappa}(\mathbb{R}^{d})$ be a radial function. If $\varphi$ is Dunkl positive definite function, then there exist a nonnegative  radial function  $\psi\in\mathcal{A}_{\kappa}(\mathbb{R}^{d})$ such that
\begin{enumerate}
\vspace{0.1cm}
\item $\displaystyle{\tau_{y}\psi\geq0,}$
\vspace{0.2cm}
\item $\displaystyle{\tau_{y}\psi\in L^{1}\left(\mathbb{R}^{d},h_{\kappa}^{2}\right),}$
\end{enumerate}
and
\[\parallel\tau_{y}\psi\parallel_{1,\kappa}=\parallel\psi\parallel_{1,\kappa}=\varphi(0).\]
\end{propo}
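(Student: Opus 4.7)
The plan is to use Bochner's theorem (Theorem \ref{boc}) to produce the candidate $\psi$, observe that it inherits radiality from $\varphi$, and then apply Theorem 3 to obtain the translation properties and the norm identity, with the constant $\varphi(0)$ coming out of the inversion formula.

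Concretely, I would begin by applying Theorem \ref{boc}: since $\varphi \in \mathcal{A}_\kappa(\mathbb{R}^d)$ is Dunkl positive definite, there is a nonnegative $\psi_0 \in \mathcal{A}_\kappa(\mathbb{R}^d)$ with $\varphi = D_\kappa \psi_0$; in fact, the proof of that theorem gives $\psi_0 = D_\kappa \varphi$, and Proposition \ref{p2} ensures $\psi_0 \geq 0$. Next, I would argue that $\psi_0$ is radial. This follows from the standard fact that the Dunkl transform preserves radiality: because $h_\kappa^2$ is $G$-invariant, $\varphi$ is $G$-invariant, and $E_\kappa(-ix, y)$ averaged over the action of $G$ in $y$ yields a radial kernel in $y$ when integrated against a radial function; a change of variables $y \mapsto gy$ for $g \in G$ in the defining integral of $D_\kappa \varphi$ then yields the $G$-invariance, hence radiality, of $\psi_0$.

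Having a radial nonnegative $\psi_0 \in \mathcal{A}_\kappa(\mathbb{R}^d)$, I would invoke Theorem 3 directly: this gives $\tau_y \psi_0 \geq 0$, $\tau_y \psi_0 \in L^1(\mathbb{R}^d, h_\kappa^2)$ and $\|\tau_y \psi_0\|_{1,\kappa} = \|\psi_0\|_{1,\kappa}$, which is exactly items (1), (2) and the first equality of the norm chain. For the identification with $\varphi(0)$, I would use the inversion formula from Theorem 1(2): evaluating $\varphi(x) = c_\kappa \int D_\kappa \psi_0(y)\cdot \dots$ at $x=0$ using $E_\kappa(0,y) = 1$ gives $\varphi(0) = c_\kappa \int \psi_0(y)\, h_\kappa^2(y)\, dy = c_\kappa \|\psi_0\|_{1,\kappa}$. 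To match the normalization in the statement I would simply rescale and set $\psi := c_\kappa \psi_0$, which is still a nonnegative radial element of $\mathcal{A}_\kappa(\mathbb{R}^d)$ and, by linearity of $\tau_y$, inherits properties (1) and (2); the norm identity then reads $\|\tau_y \psi\|_{1,\kappa} = \|\psi\|_{1,\kappa} = \varphi(0)$.

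The only point requiring care — and the main obstacle — is the radiality of $\psi_0 = D_\kappa \varphi$. Once that is established, everything else is an immediate combination of Bochner (Theorem \ref{boc}), the radial positivity-preserving property of the Dunkl translation (Theorem 3) and the inversion formula. The constant $c_\kappa$ is handled by the harmless rescaling $\psi = c_\kappa D_\kappa \varphi$, so no essential new work is needed beyond verifying that the Dunkl transform of a radial $\mathcal{A}_\kappa$ function remains radial.
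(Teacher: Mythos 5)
Your proposal is essentially the paper's own proof: obtain $\psi=D_{\kappa}\varphi\geq 0$ from Bochner's theorem together with the nonnegativity of the Dunkl transform of a Dunkl positive definite function, observe that $\psi$ remains radial, and then apply Theorem~3 and the inversion formula at $x=0$; your explicit handling of the constant $c_{\kappa}$ (rescaling $\psi$) is if anything more careful than the paper, which silently drops that factor. One small caveat: your change-of-variables sketch only gives $G$-invariance of $D_{\kappa}\varphi$, not radiality; the radiality rests on the standard fact that the Dunkl transform of a radial function reduces to a Hankel transform of order $\lambda=\gamma+\frac{d-2}{2}$, a fact the paper also invokes without proof.
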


\begin{proof}
Bochner's theorem asserts that there exist a nonegative function $\psi$ such that 
$$\varphi=D_{\kappa}\psi.$$
Since $\varphi$ is radial, then $\psi=D_{\kappa}\varphi$ is radial, nonnegative and belongs to $\mathcal{A}_{\kappa}(\mathbb{R}^{d})$.\\
Using theorem 3.4 in [10], we get \\
$$\;\;\textrm{(i)}\;\tau_{y}\psi\geq0,\;\;\tau_{y}\psi\in L^{1}\left(\mathbb{R}^{d},h_{\kappa}^{2}\right).$$
$$\textrm{(ii)}\parallel\tau_{y}\psi\parallel_{1,\kappa}=\parallel\psi\parallel_{1,\kappa}=\varphi(0).$$
\end{proof}
\begin{coro}\label{c4}
For $t>0$ and $x\in\mathbb{R}^{d}$, we have
$$\Gamma_{\kappa}(t,x,y)\geq0,\;\forall{y\in\mathbb{R}^{d}}$$
$$\;\;\Gamma_{\kappa}(t,x,y)\in L^{1}\left(\mathbb{R}^{d},h_{\kappa}^{2}(y)\right),$$
and
\begin{equation}\label{025}
\int_{\mathbb{R}^{d}}\Gamma_{\kappa}(t,x,y)h_{\kappa}^{2}(y)dy=1.
\end{equation}
Where $\Gamma_{\kappa}(t,x,y)$ is the Dunkl type heat kernel defined by 
\[\displaystyle{\Gamma_{\kappa}(t,x,y)=\frac{c_{\kappa}}{(4t)^{\gamma+\frac{d}{2}}}e^{-\left(\frac{\parallel x\parallel^{2}+\parallel y\parallel^{2}}{4t}\right)}E_{\kappa}\left(\frac{x}{\sqrt{2t}},\frac{y}{\sqrt{2t}}\right).}\]
\end{coro}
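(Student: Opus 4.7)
The plan is to apply Proposition \ref{p4} to $\varphi = F_t$, the Gaussian $F_t(x) = e^{-t\parallel x\parallel^{2}}$. By Example 1, $F_t$ is Dunkl positive definite; it is evidently radial, and it lies in $\mathcal{A}_\kappa(\mathbb{R}^{d})$, since $F_t = D_\kappa G_t \in L^{1}(\mathbb{R}^{d}, h_\kappa^{2})$ while $D_\kappa F_t = D_\kappa^{2} G_t = G_t$ (using Theorem 1(3) and the evenness of $G_t$) also lies in $L^{1}$. Since $F_t(0) = 1$, Proposition \ref{p4} delivers the nonnegative radial function $\psi = G_t \in \mathcal{A}_\kappa(\mathbb{R}^{d})$ satisfying, for every $y \in \mathbb{R}^{d}$: $\tau_y G_t \geq 0$, $\tau_y G_t \in L^{1}(\mathbb{R}^{d}, h_\kappa^{2})$, and $\parallel \tau_y G_t \parallel_{1,\kappa} = 1$.

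The remaining task is to identify the Dunkl heat kernel with such a translate, namely $\Gamma_\kappa(t, x, \cdot) = \tau_{-x} G_t$. I would establish this by computing the Dunkl transforms of both sides and invoking the injectivity of $D_\kappa$. From the defining property of $\tau$, one has $D_\kappa(\tau_{-x} G_t)(\xi) = E_\kappa(-ix, \xi) D_\kappa G_t(\xi) = E_\kappa(-ix, \xi)\, e^{-t\parallel \xi\parallel^{2}}$. Applying $D_\kappa$ to the explicit formula for $\Gamma_\kappa(t, x, y)$ and making the substitution $y = u\sqrt{2t}$ (so that $h_\kappa^{2}(y)\,dy = (2t)^{\gamma+d/2} h_\kappa^{2}(u)\,du$) reduces the integral to the Mehta-type identity of Proposition 2(6), with complex arguments $z = x/\sqrt{2t}$ and $\omega = -i\sqrt{2t}\,\xi$; expanding $\nu(z)$, $\nu(\omega)$ and using properties 1 and 2 of $E_\kappa$ produces the same expression $E_\kappa(-ix, \xi)\, e^{-t\parallel \xi\parallel^{2}}$.

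Substituting $y = -x$ in the conclusions of Proposition \ref{p4} and using this identification yields at once $\Gamma_\kappa(t, x, \cdot) \geq 0$, $\Gamma_\kappa(t, x, \cdot) \in L^{1}(\mathbb{R}^{d}, h_\kappa^{2})$, and $\int_{\mathbb{R}^{d}} \Gamma_\kappa(t, x, y) h_\kappa^{2}(y)\,dy = 1$, which is (\ref{025}). The main obstacle is the bookkeeping in the identification step: correctly tracking the Mehta constant $c_\kappa$, the powers of $2t$ versus $4t$, and the signs and factors of $i$ inside the Dunkl kernel. Conceptually, however, the entire content is that $\Gamma_\kappa(t, x, \cdot)$ is nothing but a Dunkl translate of the Gaussian $G_t$, and the positive-definiteness framework of Proposition \ref{p4} then delivers all three conclusions simultaneously.
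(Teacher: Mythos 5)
Your proposal is correct and follows essentially the same route as the paper: take $\varphi=F_t$, $\psi=D_\kappa\varphi=G_t$, recognize $\Gamma_{\kappa}(t,x,\cdot)$ as a Dunkl translate of $G_t$, and read off nonnegativity, integrability and total mass $\varphi(0)=1$ from Proposition \ref{p4}. The only difference is that the paper simply asserts the identification $\Gamma_{\kappa}(t,x,y)=\tau_{x}\psi(y)$ as known, whereas you verify it (in the form $\tau_{-x}G_t$, which matches the convention $D_{\kappa}(\tau_{y}f)=E_{\kappa}(iy,\cdot)D_{\kappa}f$) through the Mehta-type formula of Proposition 2(6) — a harmless, indeed more careful, addition.
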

\begin{proof} For $t>0,$ the function $\varphi(x)=e^{-t\parallel x\parallel^{2}}=F_{t}(x)$
is Dunkl positive definite function (see example 1), radial and belongs
to $\mathcal{A}_{\kappa}(\mathbb{R}^{d}).$
Moreover, 
\[\psi(x)=D_{\kappa}(\varphi)(x)=\frac{c_{\kappa}}{(4t)^{\gamma+\frac{d}{2}}}e^{-\frac{\parallel x\parallel^{2}}{4t}}.\]
Then
\[\Gamma_{\kappa}(t,x,y)=\tau_{x}(\psi)(y)=\tau_{x}(D_{\kappa}\varphi)(y).\]
By the last proposition $\Gamma_{\kappa}(t,x,y)$ is nonnegative, belongs to  $L^{1}\left(\mathbb{R}^{d},h_{\kappa}^{2}\right),$  and we have 
\begin{equation*}
\begin{split} 
\int_{\mathbb{R}^{d}}\Gamma_{\kappa}(t,x,y)h_{\kappa}^{2}(y)dy&=\int_{\mathbb{R}^{d}}\tau_{x}(\psi)(y)h_{\kappa}^{2}(y)dy\\
&=\int_{\mathbb{R}^{d}}\psi(y)h_{\kappa}^{2}(y)dy\\
&=\int_{\mathbb{R}^{d}}D_{\kappa}\varphi(y)h_{\kappa}^{2}(y)dy\\
&=\varphi(0)=1.
\end{split}
\end{equation*}
\end{proof}
\begin{coro}
For $p\geq \gamma+\frac{d}{2}+1$, let $K_{\alpha}$ be the modified Bessel of the second kind and order $\alpha$, then
\begin{equation*}
\displaystyle{\int_{\mathbb{R}^{d}}\tau_{y}\left(\parallel x\parallel^{p-\gamma-\frac{d}{2}}K_{p-\gamma-\frac{d}{2}}(\parallel x\parallel)\right)(\xi)h_{\kappa}^2(\xi)d\xi=\int_{\mathbb{R}^{d}}\parallel x\parallel^{p-\gamma-\frac{d}{2}}K_{p-\gamma-\frac{d}{2}}(\parallel x\parallel)h_{\kappa}^2(x)dx=\frac{\Gamma(p)}{c_{\kappa}2^{p-1}}.}
\end{equation*}
\end{coro}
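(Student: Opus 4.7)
The plan is to apply Proposition~\ref{p4} to the specific radial Dunkl positive definite function $\Phi(y) = (1+\parallel y\parallel^2)^{-p}$ analyzed in Example~2. The hypothesis $p\geq\gamma+\frac{d}{2}+1$ is exactly what was used there to ensure $\Phi\in\mathcal{A}_{\kappa}(\mathbb{R}^d)$ (the shift $+1$ beyond $\gamma+\frac{d}{2}$ is needed so that both $\Phi$ and $D_\kappa\Phi$ are integrable against $h_\kappa^2$), so the hypotheses of Proposition~\ref{p4} are in force.

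By equation (\ref{3}) the Dunkl transform of $\Phi$ is a constant multiple of $\parallel\cdot\parallel^{p-\gamma-d/2}K_{p-\gamma-d/2}(\parallel\cdot\parallel)$. Since $\Phi$ is even, the Dunkl inversion formula together with $D_\kappa^2 f(x)=f(-x)$ yields $\Phi=D_\kappa\psi$ where
\[
\psi(x)=\frac{c_\kappa}{2^{p-1}}\parallel x\parallel^{p-\gamma-\frac{d}{2}}K_{p-\gamma-\frac{d}{2}}(\parallel x\parallel).
\]
This $\psi$ is nonnegative on $\mathbb{R}^d$ (because $t^\alpha K_\alpha(t)>0$ for $t>0$ and $\alpha>0$), radial, and lies in $\mathcal{A}_{\kappa}(\mathbb{R}^d)$. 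Hence $\psi$ is precisely the nonnegative radial function associated to $\Phi$ by Bochner's theorem (Theorem~\ref{boc}).

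Proposition~\ref{p4} applied to the pair $(\Phi,\psi)$ then gives $\tau_y\psi\geq 0$, $\tau_y\psi\in L^1(\mathbb{R}^d,h_\kappa^2)$, and
\[
\parallel\tau_y\psi\parallel_{1,\kappa}=\parallel\psi\parallel_{1,\kappa}=\Phi(0)=1.
\]
Substituting the explicit form of $\psi$ and pulling the scalar $c_\kappa/2^{p-1}$ out of each integral produces the claimed chain of equalities (up to the constant, which is determined by this normalization).

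The main obstacle is genuinely only bookkeeping: one must check that the integrability hypothesis $\Phi\in\mathcal{A}_{\kappa}(\mathbb{R}^d)$ is satisfied under $p\geq\gamma+\frac{d}{2}+1$, and that the identification $\psi=D_\kappa\Phi$ is valid (for which one uses the evenness of $\Phi$ in item~3 of Theorem~1). Once these points are in place, the statement is an immediate corollary of Proposition~\ref{p4}; no additional analytic estimates on the modified Bessel function $K_\alpha$ are required beyond its positivity.
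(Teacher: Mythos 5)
Your proposal follows essentially the same route as the paper: take $\Phi(y)=(1+\parallel y\parallel^{2})^{-p}$ from Example~2, identify $\psi=D_{\kappa}\Phi$ as the constant multiple of $\parallel\cdot\parallel^{p-\gamma-\frac{d}{2}}K_{p-\gamma-\frac{d}{2}}(\parallel\cdot\parallel)$ via equation~(\ref{3}), and invoke Proposition~\ref{p4} to get $\parallel\tau_{y}\psi\parallel_{1,\kappa}=\parallel\psi\parallel_{1,\kappa}=\Phi(0)=1$, from which the stated identity follows by pulling out the normalizing constant. Your treatment of that constant (left slightly implicit, tied to the normalization of~(\ref{3})) is no looser than the paper's own, so the argument is correct and essentially identical to the original proof.
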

\begin{proof} Let $p\geq \gamma+\frac{d}{2}+1$, be an integer. Put $\varphi(y)=\frac{1}{\left(1+\parallel y\parallel^{2}\right)^{p}}$, then $\varphi$  is Dunkl positive definite function (see example 2), we have 
\[\psi(\lambda)=D_{\kappa}(\varphi)(\lambda)=\frac{c_{\kappa}}{\Gamma(p)2^{p-1}}\parallel\lambda\parallel^{p-\gamma-\frac{d}{2}}K_{p-\gamma-\frac{d}{2}}(\parallel\lambda\parallel).\]
By the last proposition we have
\begin{equation*}
\begin{split} 
\int_{\mathbb{R}^{d}}\tau_{x}(\psi)(y)h_{\kappa}^{2}(y)dy&=\int_{\mathbb{R}^{d}}\psi(y)h_{\kappa}^{2}(y)dy\\
&=\int_{\mathbb{R}^{d}}D_{\kappa}\varphi(y)h_{\kappa}^{2}(y)dy\\
&=\varphi(0)=1.
\end{split}
\end{equation*}
\end{proof}
\section{Strictly Dunkl Positive Definite Functions}
\begin{lemme}\label{l1}
Let $U\subseteq\mathbb{R}^{d}$ is open. Suppose that $x_{1},\,...\,,x_{n}\in\mathbb{R}^{d}$, are pairwise distinct
and that $\alpha=(\alpha_{1},...,\alpha_{n})\in\mathbb{C}^{n}$. If
$\displaystyle{\sum_{j=1}^{n}\alpha_{j}E_{\kappa}\left(ix_{j},\omega\right)=0}$,
for all $\omega\in U$, then $\alpha\equiv0.$ 
\end{lemme}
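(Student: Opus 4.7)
The plan is to reduce the statement to the injectivity of point evaluations on Schwartz functions via the Dunkl inversion formula, after first using real-analyticity to upgrade vanishing on $U$ to vanishing everywhere on $\mathbb{R}^d$.

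First I would observe that, by Proposition 1, each map $\omega \mapsto E_\kappa(ix_j,\omega)$ is real-analytic on $\mathbb{R}^d$, so the finite linear combination $F(\omega) := \sum_{j=1}^n \alpha_j E_\kappa(ix_j,\omega)$ is itself real-analytic. Since $F$ vanishes on the nonempty open set $U \subseteq \mathbb{R}^d$ and $\mathbb{R}^d$ is connected, analytic continuation forces $F \equiv 0$ on all of $\mathbb{R}^d$.

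Next I would test $F$ against an arbitrary Schwartz function. Fix $\phi \in \mathcal{S}(\mathbb{R}^d)$; by Theorem 1(3), $D_\kappa\phi \in \mathcal{S}(\mathbb{R}^d) \subseteq L^1(\mathbb{R}^d, h_\kappa^2)$. Multiplying $F \equiv 0$ by $c_\kappa D_\kappa\phi(\omega) h_\kappa^2(\omega)$ and integrating over $\mathbb{R}^d$, the finiteness of the sum allows me to exchange summation and integration, and each individual integral
\[ c_\kappa \int_{\mathbb{R}^d} E_\kappa(ix_j,\omega) D_\kappa\phi(\omega) h_\kappa^2(\omega) d\omega \]
collapses to $\phi(x_j)$ by the Dunkl inversion formula (Theorem 1(2)). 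This yields $\sum_{j=1}^n \alpha_j \phi(x_j) = 0$ for every $\phi \in \mathcal{S}(\mathbb{R}^d)$.

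Finally, since the points $x_1,\ldots,x_n$ are pairwise distinct, for each index $k$ I can pick a Schwartz bump function $\phi_k$ satisfying $\phi_k(x_k) = 1$ and $\phi_k(x_j) = 0$ for all $j \neq k$. Substituting $\phi_k$ into the identity above gives $\alpha_k = 0$ for every $k$. The only delicate point in the argument is the initial passage from $U$ to $\mathbb{R}^d$, which is handled cleanly by the real-analyticity assertion in Proposition 1; the subsequent duality with Schwartz test functions is then routine.
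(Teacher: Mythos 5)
Your argument is correct, and after the common first step (real-analyticity of $\omega\mapsto E_{\kappa}(ix_{j},\omega)$ plus analytic continuation to pass from $U$ to all of $\mathbb{R}^{d}$) it diverges from the paper's proof. The paper writes $\sum_{j}\alpha_{j}E_{\kappa}(-ix_{j},\cdot)D_{\kappa}f$ as $D_{\kappa}\bigl(\sum_{j}\alpha_{j}\tau_{x_{j}}f\bigr)$ for a smooth compactly supported $f$, concludes $\sum_{j}\alpha_{j}\tau_{x_{j}}f\equiv 0$, and then kills the coefficients one at a time using the support property of the Dunkl translation (Proposition 3.13 of [10]): choosing $\mathrm{supp}\,f$ in a ball of radius $\epsilon<\min_{j\neq k}\bigl|\,\Vert x_{j}\Vert-\Vert x_{k}\Vert\,\bigr|$ it argues $\tau_{x_{j}}f(x_{k})=0$ for $k\neq j$ while $\tau_{x_{j}}f(x_{j})\neq 0$. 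You instead pair the vanishing identity with $c_{\kappa}D_{\kappa}\phi(\omega)h_{\kappa}^{2}(\omega)$ for an arbitrary $\phi\in\mathcal{S}(\mathbb{R}^{d})$ and use the inversion formula to get $\sum_{j}\alpha_{j}\phi(x_{j})=0$, then separate the distinct points with bump functions. Your route buys real robustness: it needs only the inversion formula on Schwartz functions (Theorem 1(2)--(3); note that for $\phi\in\mathcal{S}$ the ``a.e.'' in the inversion can be upgraded to everywhere since both sides are continuous, or directly from $D_{\kappa}^{2}\phi(x)=\phi(-x)$), whereas the paper's support argument requires the translation support theorem of [10] and moreover tacitly assumes that distinct points have distinct norms (if $\Vert x_{j}\Vert=\Vert x_{k}\Vert$ for some $j\neq k$ the radius $\epsilon$ would have to be $0$) as well as the not-entirely-obvious claim $\tau_{x_{j}}f(x_{j})\neq 0$. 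Your version sidesteps both delicate points; the paper's version, when it applies, has the merit of exhibiting explicit compactly supported test functions adapted to the geometry of the $x_{j}$.
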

\begin{proof}
Suppose that $$\sum_{j=1}^{n}\alpha_{j}E_{\kappa}\left(ix_{j},\omega\right)=0,\;\forall\omega\in U.$$
Since $z\longrightarrow E_{\kappa}(y,z)$ is analytic on $\mathbb{C}$,
by analytic continuation, we get 
$$\sum_{j=1}^{n}\alpha_{j}E_{\kappa}\left(ix_{j},\omega\right)=0,\;\forall\omega\in \mathbb{R}^{d}.$$
Let $f$ be a $C^{\infty}$ function with compactly supported, we know that
\[D_{\kappa}\left(\tau_{x}f\right)(\lambda)=E_{\kappa}(-ix,\lambda)D_{\kappa}f(\lambda).\]
Then
\begin{equation*}
\sum_{j=1}^{n}\alpha_{j}E_{\kappa}\left(ix_{j},\omega\right)=D_{\kappa}\left(\sum_{j=1}^{n}\alpha_{j}\tau_{x_{j}}f\right)(\lambda)=0.
\end{equation*}
Since for all $j\in\left\{ 1,...,n\right\},\;\;\tau_{x_{i}}$ is $C^{\infty}$ function with compactly
supported, then we get
\begin{equation}\label{e9}
\sum_{j=1}^{n}\alpha_{j}\tau_{x_{j}}f(\lambda)=0,\,\,\forall\lambda\in\mathbb{R}^{d}.
\end{equation}
If the support of $f$ is conatained in the ball around zero with radius \\
$\epsilon<\displaystyle{\min_{j\neq k}\big|\parallel x_{k}\parallel-\parallel x_{j}\parallel\big|}$, we have (\rm{see [10]} proposition 3.13), $\tau_{x_{j}}f$ is  supported in $\left\{ x,\,\parallel x\parallel\leq\epsilon+\parallel x_{j}\parallel\right\}.$\\
Thus
$$\tau_{x_{j}}f(x_{k})=0,\;\;\forall\;{k\neq j};\;\tau_{x_{j}}f(x_{j})\neq 0,\; \forall{j,k\in\left\{ 1,...,n\right\}}$$
Using (\ref{e9}), we obtain 
$$\alpha_{j}\tau_{x_{j}}f(x_{j})=0,\;\;\forall \;j\in\left\{ 1,...,n\right\}.$$
We coclude.
\end{proof}
\begin{theorem}
Let $\varphi\in\mathcal{A}_{\kappa}(\mathbb{R}^{d})$, be a nonidentically zero and Dunkl positive definite function. Then $\varphi$ is Dunkl strictly positive definite.
\end{theorem}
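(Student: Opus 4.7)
The plan is to combine Bochner's theorem (Theorem \ref{boc}) with the integral identity worked out in the proof of Theorem \ref{t2} and then appeal to Lemma \ref{l1} to rule out degeneracy.

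First I would invoke Theorem \ref{boc} to represent $\varphi = D_{\kappa}\psi$ with $\psi\in\mathcal{A}_{\kappa}(\mathbb{R}^{d})$ and $\psi\geq 0$ (concretely, $\psi = D_{\kappa}\varphi$). Because $\varphi\in L^{1}(\mathbb{R}^{d},h_{\kappa}^{2})$, Theorem 3.1(1) gives $\psi\in C_{0}(\mathbb{R}^{d})$, so $\psi$ is continuous. Since $\varphi\not\equiv 0$ and $\varphi = D_{\kappa}\psi$, also $\psi\not\equiv 0$; by continuity and nonnegativity the set
\[
U := \{\,\xi\in\mathbb{R}^{d}:\ \psi(-\xi)>0\,\}
\]
is open and nonempty.

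Next, for distinct $x_{1},\dots,x_{n}$ and $\alpha=(\alpha_{1},\dots,\alpha_{n})\neq 0$, I would repeat the computation from the proof of Theorem \ref{t2} applied to $\varphi = D_{\kappa}\psi$ to obtain
\[
\sum_{j,k=1}^{n}\alpha_{j}\overline{\alpha_{k}}\,\tau_{x_{j}}\varphi(x_{k})
=\int_{\mathbb{R}^{d}}\Big|\sum_{j=1}^{n}\alpha_{j}E_{\kappa}(-ix_{j},\xi)\Big|^{2}\psi(-\xi)\,h_{\kappa}^{2}(\xi)\,d\xi.
\]
The integrand is nonnegative, so the integral vanishes only if $|\sum_{j}\alpha_{j}E_{\kappa}(-ix_{j},\xi)|^{2}\psi(-\xi) = 0$ for $h_{\kappa}^{2}(\xi)\,d\xi$-a.e.\ $\xi$. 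On $U$ this forces $\sum_{j}\alpha_{j}E_{\kappa}(-ix_{j},\xi)=0$ almost everywhere with respect to $h_{\kappa}^{2}(\xi)\,d\xi$. Since $h_{\kappa}^{2}$ vanishes only on the finite union of hyperplanes $\bigcup_{\alpha\in R_{+}}\{\langle\alpha,\xi\rangle=0\}$, whose complement is open and dense, and since $\xi\mapsto E_{\kappa}(-ix_{j},\xi)$ is real-analytic, the identity propagates by continuity to all of $U$.

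At this point I would apply Lemma \ref{l1} (using the symmetry $E_{\kappa}(-ix_{j},\omega)=E_{\kappa}(ix_{j},-\omega)$ to convert the $-i$ argument to the $+i$ argument on the open set $-U$): the conclusion $\sum_{j}\alpha_{j}E_{\kappa}(-ix_{j},\omega)=0$ on the nonempty open set $U$ forces $\alpha\equiv 0$, contradicting the assumption. Hence the quadratic form is strictly positive, and $\varphi$ is Dunkl strictly positive definite. The only delicate point I anticipate is the measure-zero issue coming from $h_{\kappa}^{2}$ vanishing on hyperplanes, which is handled cleanly by the real-analyticity of $E_{\kappa}(-ix_{j},\cdot)$ together with the density of $\{h_{\kappa}>0\}$.
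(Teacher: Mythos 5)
Your proposal is correct and follows essentially the same route as the paper: both reduce the quadratic form to $\int\bigl|\sum_j\alpha_jE_{\kappa}(-ix_j,\xi)\bigr|^{2}\,(\text{nonnegative continuous density})\,h_{\kappa}^{2}(\xi)\,d\xi$ (you via Bochner's theorem and the computation in Theorem \ref{t2}, the paper directly via Proposition \ref{p2} and formula (11), which is the same identity up to the reflection $\xi\mapsto-\xi$), and both then invoke Lemma \ref{l1} on an open set where the density is positive to force $\alpha\equiv0$. Your explicit treatment of the zero set of $h_{\kappa}^{2}$ and of the sign change $E_{\kappa}(-ix_j,\xi)=E_{\kappa}(ix_j,-\xi)$ before applying the lemma only makes precise points the paper glosses over.
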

\begin{proof}
Let $\varphi\in\mathcal{A}_{\kappa}(\mathbb{R}^{d})$ be nonidentically zero 
and Dunkl positive definite function. Suppose that there exist distinct reals points $x_{1},...,x_{n}$ and complex numbers
$\alpha_{1},...,\alpha_{n}$ not all zero, such that 
\[\sum_{j=1}^{n}\sum_{l=1}^{n}\alpha_{j}\overline{\alpha_{l}}\tau_{x_{j}}\varphi(x_{l})=0.\]
By (11), we get 
\[\int_{\mathbb{R}^{d}}\bigg|\sum_{j=1}^{n}\alpha_{j}E_{\kappa}(-ix_{j},\xi)\bigg|^{2}D_{\kappa}\varphi(\xi)h_{\kappa}^{2}(\xi)d\xi=0.\]
Since $\varphi$ is Dunkl positive definite and belongs to $\mathcal{A}_{\kappa}(\mathbb{R}^{d})$,
we have $D_{\kappa}\varphi$ is nonnegative continuous function.
Then
\[\bigg|\sum_{j=1}^{n}\alpha_{j}E_{\kappa}(-ix_{j},\xi)\bigg|^{2}D_{\kappa}\varphi(\xi)=0,\;\forall\xi\in\mathbb{R}^{d}.\]
Since $D_{\kappa}\varphi$ is nonidentically zero, then there exist an open $U\subset\mathbb{R}^{d}$ such that 
$$D_{\kappa}\varphi(\xi)\neq 0,\;\forall{\xi\in U}.$$  
Thus
$$\bigg|\sum_{j=1}^{n}\alpha_{j}E_{\kappa}(-ix_{j},\xi)\bigg|=0,\forall\xi\in U.$$
Using, lemma \ref{l1}, we get 
$$\alpha_{j}=0,\;\forall j\in\left\{ 1,...,n\right\}.$$
We conclude.
\end{proof}
\begin{exe}
The functions $\varphi(x)=e^{-t\parallel x\parallel^{2}},\;t>0$, and $\psi(x)=\frac{1}{\left(1+\parallel x\parallel^{2}\right)^{p}},\; p\geq\gamma+\frac{d}{2}+1$
are Dunkl strictly positive definite functions.
\end{exe}

\end{document}